\tikzset{>=latex}
\pgfplotsset{compat=newest}
\numberwithin{equation}{section}
\newtheorem{theorem}{Theorem}[section]
\newtheorem{definition}[theorem]{Definition}
\newtheorem{proposition}[theorem]{Proposition}
\newtheorem{lemma}[theorem]{Lemma}
\newtheorem{remark}[theorem]{Remark}
\newtheorem*{definition*}{Definition}
\newcommand{\cali}[1]{\mathscr{#1}}
\newcommand{\supp}{{\rm supp}}
\newcommand{\diff}{{\rm d}}
\newcommand{\DSH}{{\rm DSH}}
\newcommand{\del}{\partial}
\newcommand{\ddc}{{\rm dd^c}}
\newcommand{\dd}{{\rm d}}
\newcommand{\dbar}{\overline\partial}
\newcommand{\ddbar}{\partial\overline\partial}
\newcommand{\sign}{{\rm sign\ \!}}
\newcommand{\Cc}{\cali{C}}
\newcommand{\Fc}{\cali{F}}
\newcommand{\Qc}{\cali{Q}}
\newcommand{\cA}{\mathcal{A}}
\newcommand{\cB}{\mathcal{B}}
\newcommand{\FS}{{\rm FS}}
\newcommand{\B}{\mathbb{B}}
\newcommand{\C}{\mathbb{C}}
\newcommand{\N}{\mathbb{N}}
\newcommand{\Z}{\mathbb{Z}}
\newcommand{\R}{\mathbb{R}}
\renewcommand\P{\mathbb{P}}
\newcommand{\lp}{\langle}
\newcommand{\rp}{\rangle}
\newcommand{\norm}[1]{\lVert#1\rVert}
\newcommand{\oA}{\mathcal{A}}
\newcommand{\oB}{\mathcal{B}}
\newcommand{\oN}{\mathcal{N}}
\newcommand{\oE}{\mathcal{E}}
\newcommand{\bv}{\mathbf{v}}
\title{Mixing and CLT for H\'enon-Sibony maps: plurisubharmonic observables}
\author{Marco Vergamini}
\address{Scuola Normale Superiore, Pisa, Italy}
\email{marco.vergamini@sns.it}
\author{Hao Wu}
\address{National University of Singapore}
\email{e0011551@u.nus.edu}
\thanks{}
\begin{document}

\begin{abstract}
Let $f$ be a complex H\'enon map and $\mu$ its unique measure of maximal entropy. We prove that $\mu$ is exponentially mixing of all orders for all (not necessarily bounded) plurisubharmonic observables, and that all plurisubharmonic functions satisfy the central limit theorem with respect to $\mu$. Our results hold more generally for every H\'enon-Sibony map on $\mathbb{C}^k$.
\end{abstract}

\clearpage\maketitle
\thispagestyle{empty}

\noindent\textbf{Mathematics Subject Classification 2020:} 37F80, 32H50, 32U05, 60F05

\medskip

\noindent\textbf{Keywords:} H\'enon map, equilibrium measure, exponential mixing, central limit theorem

\setcounter{tocdepth}{1}

\medskip

\section{Introduction} \label{intro}

H\'enon maps are among the most studied dynamical systems. They were introduced in the real setting by Michel H\'enon \cite{henon}. Since the 1980s, the study of these maps in the complex setting has seen a surge in activity, following the development of pluripotential theory techniques, see for instance the works of Bedford-Lyubich-Smillie \cite{bed-1993,bedford1,bed-1991,bedford2} and Fornaess-Sibony \cite{fornaess:survey,forn,sibony:panorama}. In particular, a unique measure of maximal entropy $\mu$ (also called \textit{equilibrium measure}) for these systems was introduced by Sibony, as the intersection of two canonical positive closed currents, and the study of the ergodic properties of $\mu$ has quickly become a central problem in the domain. It is shown in \cite{bedford1} that $\mu$ is Bernoulli. In particular, it is mixing of all orders. On the other hand, the control of the speed of mixing is a challenging problem, and is usually obtained only under strong hyperbolicity assumptions on the systems, see for instance \cite{liverani}.

\smallskip

In \cite{bian-dinh-sigma}, Bianchi-Dinh proved the exponential mixing of all orders and the central limit theorem for \textbf{smooth} (and, by the interpolation techniques, \textbf{H\"older continuous}) observables for Hénon maps on $\mathbb{C}^2$ and, more generally, for all so called \emph{Hénon-Sibony maps} on $\mathbb{C}^k$ \cite{deth-auto,rigidity,DS-saddle}. This solved a long-standing question in the domain. De Th\'elin-Vigny \cite{hen-gab-CLT} recently generalized this result for observables in a special subspace of \textbf{bounded d.s.h.\ functions} (their result is valid in the more general setting of generic birational maps \cite{bedford-diller,dtv-memoir}). The goal of this paper is to obtain these strong statistical properties for all \textbf{d.s.h.\ observables}. We recall that, roughly speaking, d.s.h.\ functions are the elements of the vector space spanned by (non necessarily bounded) plurisubharmonic (p.s.h.\ for short) functions, see \cite{din-sib-cmh} and Section \ref{defs} for precise definitions. Since their introduction, d.s.h.\ functions have been proven to be a fundamental tool in the study of holomorphic dynamical systems; they are naturally adapted to the underlying complex structure, and invariant by iteration of holomorphic maps, something that is not true for the usual ``real'' regularity conditions. We refer to \cite{bd-gafa,dinh-exponential,dinh-sibony:acta,dinh-sibony:cime,gov,gauthier-vigny} for examples of applications.

\smallskip

Let us recall the following definition.

\begin{definition}\label{def-mix}\rm
Let $(X,f)$ be a dynamical system and $\nu$ an $f$-invariant measure. Let $(\oE,\norm{\cdot}_\oE)$ be a normed space of real functions on $X$ with $\norm{\cdot}_{L^q(\nu)}\lesssim_q\norm{\cdot}_\oE$ for all $1 \le q < +\infty$. We say that $\nu$ is \textit{exponentially mixing of all orders} for observables in $\oE$ if, for every $\kappa\in\mathbb{N^*}$, there exist constants $C_\kappa > 0$ and $0 < \theta_\kappa < 1$ such that, for all $\varphi_0, \dots, \varphi_\kappa$ in $\oE$ and integers $0=:n_0 \le n_1 \le \ldots \le n_\kappa$, we have
$$\Big| \int \varphi_0 (\varphi_1\circ f^{n_1})\cdots (  \varphi_\kappa \circ f^{n_\kappa}) \,\dd \nu -\prod_{j=0}^\kappa  \int \varphi_j \,\dd \nu \Big| \leq C_\kappa  \,\theta_\kappa^{ \min_{0\leq j\leq \kappa-1}(n_{j+1}-n_j)  }\prod_{j=0}^\kappa \norm{\varphi_j}_\oE.$$
\end{definition}

The following is our first main result, which completely settles the question of the exponential mixing for d.s.h.\ observables.

\begin{theorem} \label{thm-mixing}
Let $f$ be a Hénon-Sibony map and $\mu$ its equilibrium measure. Then, $\mu$ is exponentially mixing of all orders for all observables in $\DSH(\mathbb{P}^k)$. More precisely: there exists $0<\theta<1$ such that for every integers $\kappa\in \N^*$,  $0=n_0\leq n_1\leq \ldots\leq n_\kappa$ and for every $\varphi_0,\varphi_1,\dots, \varphi_\kappa \in \DSH(\P^k)$, we have
$$\Big| \int \varphi_0 (\varphi_1\circ f^{n_1})\cdots (  \varphi_\kappa \circ f^{n_\kappa}) \,\dd \mu -\prod_{j=0}^\kappa  \int \varphi_j \,\dd \mu \Big| \leq C_\kappa  \,\theta ^{ \min_{0\leq j\leq \kappa-1}(n_{j+1}-n_j)  }\prod_{j=0}^\kappa \norm{\varphi_j}_\DSH,$$
where  $C_\kappa>0$ is a constant independent of $n_1,\dots,n_\kappa,\varphi_0,\dots,\varphi_\kappa$.
\end{theorem}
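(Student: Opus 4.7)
The strategy is to extend the known exponential mixing results of Bianchi--Dinh \cite{bian-dinh-sigma} and De Th\'elin--Vigny \cite{hen-gab-CLT} (valid for bounded/regular observables) to general d.s.h.\ observables by a truncation procedure, exploiting the fact that $\mu$ is a \emph{moderate} measure: there exist $\alpha, C > 0$ such that every p.s.h.\ function $u$ with $\norm{u}_{\DSH} \leq 1$ satisfies $\mu\{|u| > t\} \leq C e^{-\alpha t}$. This gives quantitative $L^p$-control on the ``tails'' of unbounded d.s.h.\ observables.

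After replacing each $\varphi_j$ by $\varphi_j - \int \varphi_j \dd \mu$, it suffices to bound the multiple correlation by $C_\kappa \theta^n \prod_j \norm{\varphi_j}_{\DSH}$, where $n := \min_j(n_{j+1}-n_j)$. Decompose each $\varphi_j = u_j - v_j$ into non-positive p.s.h.\ functions with $\norm{u_j}_{\DSH}+\norm{v_j}_{\DSH} \lesssim \norm{\varphi_j}_{\DSH}$. For a parameter $M>0$ to be optimized later, set $u_j^M := \max(u_j,-M)$ (still p.s.h., now bounded), $\varphi_j^M := u_j^M - v_j^M$, and let $\psi_j^M := \varphi_j - \varphi_j^M$. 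Then expand
\[
\prod_{j=0}^\kappa \varphi_j \circ f^{n_j} \;=\; \sum_{S \subseteq \{0,\ldots,\kappa\}} \prod_{j \in S} \psi_j^M\circ f^{n_j} \;\cdot\; \prod_{j \notin S} \varphi_j^M\circ f^{n_j}.
\]

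For the main term $S=\emptyset$, apply the exponential mixing bound of De Th\'elin--Vigny to the bounded d.s.h.\ observables $\varphi_j^M$ (whose $L^\infty$ norm is $O(M)$), obtaining a contribution of order $\theta_0^n\,M^{\kappa+1}$ for some $\theta_0 \in (0,1)$. If the cited result does not apply directly to $\varphi_j^M$, one first convolves each $u_j^M,v_j^M$ at scale $\epsilon$ to produce smooth p.s.h.\ approximants with $C^\alpha$-norms of size $M\epsilon^{-\alpha}$, invokes the Bianchi--Dinh mixing for smooth observables, and absorbs the resulting polynomial factor by slightly enlarging the rate. For each $S \neq \emptyset$, apply the generalized H\"older inequality together with $f$-invariance of $\mu$ to bound the corresponding term by
\[
\prod_{j \in S}\norm{\psi_j^M}_{L^{\kappa+1}(\mu)} \;\cdot\; \prod_{j \notin S}\norm{\varphi_j^M}_{L^{\kappa+1}(\mu)}.
\]
The moderate measure property gives $\norm{\psi_j^M}_{L^{\kappa+1}(\mu)} \lesssim_\kappa M\,e^{-\alpha M/(\kappa+1)}$, while $\norm{\varphi_j^M}_{L^{\kappa+1}(\mu)}=O(M)$, so each such term is $\le C_\kappa M^{\kappa+1} e^{-\alpha M/(\kappa+1)}$. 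A parallel expansion controls the difference $\bigl|\prod_j \int\varphi_j\,\dd\mu - \prod_j \int\varphi_j^M\,\dd\mu\bigr|$, using that $\norm{\psi_j^M}_{L^1(\mu)} \lesssim e^{-\alpha M}$.

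Assembling the estimates and setting $M := \gamma n$ with $\gamma>0$ small enough balances $\theta_0^n M^{\kappa+1}$ against $M^{\kappa+1}e^{-\alpha M/(\kappa+1)}$ and yields the required exponential rate $\theta\in(0,1)$. The principal obstacle is establishing the moderate-measure estimate for \emph{all} d.s.h.\ observables with constants uniform over $\norm{\varphi}_{\DSH}\le 1$; this is the deep pluripotential-theoretic input and ultimately rests on the structure of the super-potentials of the Green currents $T^\pm$ associated with a H\'enon-Sibony map. A secondary technical point is to keep the polynomial prefactors (in $M$, in $1/\epsilon$, and in $\kappa$) from accumulating into super-exponential growth; for the statement of Theorem~\ref{thm-mixing} the constant $C_\kappa$ is allowed to depend on $\kappa$, which gives enough slack.
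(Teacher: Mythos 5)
Your overall architecture coincides with the paper's: decompose each $\varphi_j$ into non-positive p.s.h.\ pieces (the paper's Lemma \ref{dsh-split-psh}), truncate at level $M$ via $\max(\cdot,-M)$, control the tails in $L^q(\mu)$ by the moderate property of $\mu$ (Lemma \ref{tail-split}), expand the correlation over the resulting $2^{\kappa+1}$ (or $3^{\kappa+1}$) choices, and optimize $M\sim\gamma\min_j(n_{j+1}-n_j)$. That part is fine and matches \eqref{choiceofN} and the case analysis in Section \ref{mixing-dsh}.

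The gap is in your treatment of the main term, which is precisely where the paper's new content lies. You propose to apply the De Th\'elin--Vigny mixing result to the bounded truncations $\varphi_j^M$, but that result holds only for a special subspace of bounded d.s.h.\ functions and with a norm strictly stronger than $\|\cdot\|_\infty$ (see Remark \ref{somewhat-better}); more fundamentally, the mechanism behind both \cite{hen-gab-CLT} and \cite{bian-dinh-sigma} is to make products such as $\prod_j(\varphi_j\circ f^{n_j-n_1}+M)$ p.s.h.\ by adding large constants, and this requires $\Cc^2$ control of the observables to dominate the cross terms $i\partial\varphi_i\wedge\bar\partial\varphi_j$ --- control that a truncated p.s.h.\ function does not have. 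Your fallback (convolve $u_j^M$ at scale $\epsilon$ and invoke Bianchi--Dinh for smooth observables) founders on a different point: a bounded p.s.h.\ function is in general not continuous, and there is no bound on $\|u_j^M\ast\chi_\epsilon-u_j^M\|_{L^q(\mu)}$ by a power of $\epsilon$ that is uniform over the unit ball of $\DSH$, so the approximation error cannot be balanced against the $(M\epsilon^{-2})^{\kappa+1}$ growth of the $\Cc^2$ norms. What is actually needed, and what the paper supplies, is Proposition \ref{prop-mixing-bounded}: exponential mixing of all orders for bounded p.s.h.\ observables with constants depending only on their sup norms. Its proof does not reduce to existing results; it introduces the modified auxiliary functions $\Phi^\pm$ of \eqref{phipm}, whose quadratic terms $\tfrac{\kappa}{2}\widetilde G_j^2$ generate the positivity $\kappa\sum_j i\partial\widetilde G_j\wedge\bar\partial\widetilde G_j$ that absorbs the cross terms without any $\Cc^2$ hypothesis, combined with the product system $F=(f,f^{-1})$, Lemma \ref{lem-key}, and the uniform $*$-norm bound of Lemma \ref{lem-star-bound}. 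Without an ingredient of this kind your main-term estimate does not close.
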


The exponential mixing of order 1 (i.e., the case of 2 observables) was first proved by Dinh for $\Cc^\alpha$ ($0<\alpha\le2$) observables \cite{dinh-decay-henon}. The second author extended this result to d.s.h.\ observables \cite{Wu-Ergodic}. For generic birational maps, the exponential mixing of order 1 for $\Cc^\alpha$ observables was shown by Vigny \cite{vigny-decay}. Although it is known that the exponential mixing of order 1 often implies that the measure is Bernoulli \cite{dolgopyat}, giving as a consequence the mixing of all orders, these results do not say anything about the rate of convergence, see for instance \cite[Question 1.5]{dolgopyat}.

\medskip

In \cite{bian-dinh-sigma,dinh-decay-henon}, the problem for H\"older observables can be reduced to $\Cc^2$ observables by interpolation \cite{triebel}. Moreover, as every $\Cc^2$ function is the difference of two p.s.h.\ functions, the observables $\varphi_j$'s can be assumed to be p.s.h. near the support of $\mu$.
A key idea is then to consider the new dynamical system $$F(z,w):=\big(f(z), f^{-1}(w)\big)$$ (which, when $f$ is a H\'enon map on $\C^2$, is a H\'enon-Sibony map on $\C^4$), and the new observables 
$$\big(\varphi_0(w)+M\big)\cdot \prod_{j=1}^\kappa \big(\varphi_j\circ f^{n_j-n_1}(z)+M\big)$$
and
$$\big(-\varphi_0(w)+M\big)\cdot \Big(\prod_{j=1}^\kappa \big(\varphi_j\circ f^{n_j-n_1}(z)+M\big) -2(M+1)^\kappa\Big),$$
which are p.s.h.\ for $M$ large enough. Thus, an estimate on iterations of positive closed currents (see \cite[Corollary 2.5]{bian-dinh-sigma} and Lemma \ref{lem-key} below) can be applied. However, the two functions above may not be p.s.h.\ anymore if we remove the $\Cc^2$ assumption on the $\varphi_j$'s. Hence the same  strategy does not work in the general setting of Theorem \ref{thm-mixing}. A similar problem appears when trying to adapt the proof of \cite{hen-gab-CLT}, which relies on a similar decomposition. Our proof relies both on a stronger mixing result than that of \cite{hen-gab-CLT} in the case of bounded p.s.h.\ observables, which we prove in Proposition \ref{prop-mixing-bounded} (see also Remark \ref{somewhat-better}), and on a precise control of the singularities of d.s.h.\ functions, combined with a precise regularity estimate for $\mu$.

\medskip

Our second result is the central limit theorem for all d.s.h.\ observables. Fix an observable $\varphi\in \DSH(\P^k)$, and set 
$$ S_n(\varphi):=  \varphi +\varphi \circ f +\cdots +\varphi \circ f^{n-1}.  $$
Define the non-negative constant $\sigma=\sigma(\varphi)$ by 
$$ \sigma^2 :=  \lim_{n\to \infty} \frac{1}{n} \int \big(S_n(\varphi) -  \lp \mu, \varphi\rp \big)^2 \,\dd \mu.$$\\
By Birkhoff’s ergodic theorem, we have
$$ n^{-1} S_n (\varphi) (x) \longrightarrow \lp \mu,\varphi \rp  \quad \text{for $\mu$-almost every } x\in \P^k.$$
This is an analogue of the strong law of large numbers. Our second main result is the following control of the rate in the above convergence.
We let $\oN(0,\sigma^2)$ denote the Gaussian distribution with mean $0$ and variance $\sigma^2$ (when $\sigma=0$, we mean that $\oN(0,\sigma^2)$ is the trivial point distribution at $0$).

\begin{theorem}\label{thm-clt}
Let $f$ and $\mu$ be as in Theorem \ref{thm-mixing}. Then, every $\varphi\in\DSH(\P^k)$ satisfies the central limit theorem with respect to $\mu$. Namely, we have
\begin{equation} \label{law-conv}
    \frac{S_n(\varphi )-  n\lp \mu, \varphi \rp}{\sqrt n}  \longrightarrow  \oN(0,\sigma^2)  \quad \text{as $n\to\infty$ in law}.
\end{equation}
\end{theorem}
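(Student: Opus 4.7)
The plan is to deduce the central limit theorem from Theorem \ref{thm-mixing} by the classical \emph{method of moments}, following essentially the strategy used in \cite{bian-dinh-sigma} once exponential mixing of all orders is available. First I would replace $\varphi$ by $\varphi - \langle \mu, \varphi\rangle$, so that we may assume $\langle \mu, \varphi\rangle = 0$ and $\varphi\in\DSH(\P^k)$. The integrability condition built into Definition \ref{def-mix} gives $\|\varphi\|_{L^q(\mu)} \lesssim_q \|\varphi\|_\DSH$ for every $q < \infty$, so every moment of $S_n(\varphi)$ appearing below is finite. Applying Theorem \ref{thm-mixing} with $\kappa = 1$ yields
$$\Big| \int \varphi \cdot (\varphi \circ f^n) \, \dd\mu \Big| \le C_1 \theta^n \|\varphi\|_\DSH^2,$$
so the series
$$\sigma^2 \;=\; \int \varphi^2 \, \dd\mu + 2 \sum_{n=1}^\infty \int \varphi \cdot (\varphi \circ f^n) \, \dd\mu$$
converges absolutely; a standard Ces\`aro argument shows it agrees with the limit defining $\sigma^2$ in the statement.

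The core step is to prove that, for every integer $p \ge 1$,
$$ \frac{1}{n^{p/2}} \int S_n(\varphi)^p\, \dd\mu \;\longrightarrow\; \begin{cases} (2q-1)!! \, \sigma^{2q} & \text{if } p = 2q, \\ 0 & \text{if } p = 2q+1, \end{cases}$$
which are precisely the moments of $\oN(0,\sigma^2)$; since a Gaussian law is determined by its moments, this convergence implies (\ref{law-conv}). Expanding and using $f$-invariance,
$$\int S_n(\varphi)^p\, \dd\mu \;=\; \sum_{0 \le i_1, \ldots, i_p \le n-1} \int \prod_{j=1}^p \varphi\circ f^{i_j}\, \dd\mu.$$
For each multi-index, after sorting the entries I apply Theorem \ref{thm-mixing} with $\kappa = p-1$: because $\langle \mu, \varphi\rangle = 0$ the product on the right-hand side of the mixing bound is zero, giving $\big|\int \prod_j \varphi\circ f^{i_j}\, \dd\mu\big| \le C_{p-1}\theta^{g}\|\varphi\|_\DSH^p$ with $g$ the smallest consecutive gap in the sorted indices. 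I would then fix a threshold $L_n = A\log n$ with $A$ large enough that $n^p\theta^{L_n} \to 0$, and group multi-indices by their \emph{cluster pattern}, where a cluster is a maximal block of consecutive sorted indices whose internal gaps are all $\le L_n$. The estimate above shows that configurations containing an odd-sized cluster, or featuring strictly fewer than $p/2$ clusters, contribute $o(n^{p/2})$ to the moment. The dominant term therefore comes from configurations with exactly $p/2$ clusters, each of size two (so only even $p$ produces a leading term); summing over the $(p-1)!! \binom{n}{p/2}(1+o(1))$ such configurations and evaluating each pair via Theorem \ref{thm-mixing} applied within the cluster reproduces the Gaussian moment $(2q-1)!!\sigma^{2q} n^q$.

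The main obstacle is the combinatorial bookkeeping of the cluster patterns and the control of the accumulated mixing error over roughly $n^p$ multi-indices; this is precisely why one takes $L_n$ logarithmic in $n$, so that the gain $\theta^{L_n}$ beats the polynomial growth of the index count. When $\sigma = 0$ the same calculation shows that every moment of $S_n(\varphi)/\sqrt n$ tends to $0$, which gives convergence in probability and hence in law to the Dirac mass at $0$. No additional input beyond Theorem \ref{thm-mixing} is required: once the mixing estimate is available for d.s.h. observables, the moment-method argument is entirely formal.
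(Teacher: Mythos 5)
Your overall strategy (method of moments driven by exponential mixing of all orders) is the right family of arguments, but the proposal has a genuine gap at its central step, and it is exactly the gap that Section \ref{sec-clt} of the paper is built to close. The cluster decomposition requires, at the factorization stage, that you apply a mixing estimate to the \emph{products} $\psi_i:=\prod_{j\in C_i}\varphi\circ f^{i_j-a_i}$ attached to each cluster $C_i$, viewed as new observables separated by gaps $>L_n$: this is how one shows
\[\int\prod_i\psi_i\circ f^{a_i}\,\dd\mu\;\approx\;\prod_i\int\psi_i\,\dd\mu,\]
which is needed both to extract the leading term from the all-pairs configurations and to kill the configurations with exactly $p/2$ clusters one of which is a singleton (these are $O(n^{p/2}L_n^{p/2})$ in number, so they cannot be discarded by counting plus H\"older, and applying Theorem \ref{thm-mixing} with $\kappa=p-1$ to the full configuration only yields $\theta^{\min\text{gap}}$ with the minimum taken over \emph{all} consecutive gaps, including the intra-cluster ones, which can be $0$). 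The obstruction is that a product of d.s.h.\ functions is not d.s.h.\ in general --- this is precisely the failure of \eqref{submul} highlighted in the introduction --- so Theorem \ref{thm-mixing} simply does not apply to the $\psi_i$, and your closing claim that ``no additional input beyond Theorem \ref{thm-mixing} is required'' is false. A secondary issue is that unbounded observables also break the Sobolev-embedding-type bounds used to control the non-dominant cluster patterns.

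The paper's actual route is to prove an abstract CLT criterion (Theorem \ref{general-clt}, a variant of Bj\"orklund--Gorodnik) whose hypotheses replace submultiplicativity by a \emph{controlled tail property} (Definition \ref{ctp}): one truncates $\varphi=\varphi_1^+-\varphi_1^-+\varphi_2$ with $\varphi_1^\pm$ bounded and p.s.h.\ on $B$ with $\|\varphi_1^\pm\|_\infty\lesssim M$ (Lemmas \ref{dsh-split-psh} and \ref{tail-split}), handles the tails $\varphi_2$ by their exponentially small $L^q(\mu)$ norms, rewrites products of the bounded p.s.h.\ pieces as signed sums of bounded p.s.h.\ functions via the polynomial identities $P_{t,l}$ (the $\Phi^\pm$ trick), and then invokes Proposition \ref{prop-mixing-bounded} --- mixing for bounded p.s.h.\ observables with only sup-norm dependence --- for the separated clusters; the truncation level $M$ is finally chosen proportional to the separation $b$ so that both error sources are exponentially small (Lemma \ref{alternate-lemma}). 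To repair your argument you would need to reproduce essentially all of this machinery inside the moment computation.
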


When $\sigma\neq 0$, this means that for any interval $I\subset \R$ we have
$$\lim_{n\to\infty} \mu \Big\{ \frac{S_n(\varphi )-  n\lp \mu, \varphi \rp}{\sqrt n}  \in I  \Big\} =  \frac{1}{\sqrt{2\pi}\,\sigma}\int_I e^{-t^2/(2\sigma^2)} \,\dd t.$$

For $\varphi \in \Cc^{\alpha}(\P^k),0<\alpha\leq 2$, the above central limit theorem was deduced by Bianchi-Dinh \cite{bian-dinh-sigma} from their version of Theorem \ref{thm-mixing} for $\Cc^\alpha$ observables. The proof relies on a theorem by Bj\"orklund-Gorodnik \cite{bjo-gor-clt}, which states that the exponential mixing of all orders for observables bounded in a norm $\norm{\cdot}_\oE$ implies the central limit theorem, under some natural conditions including
\begin{equation} \label{submul}
    \norm{\varphi_1\varphi_2}_\oE\leq c \norm{\varphi_1}_\oE \norm{\varphi_2}_\oE\quad\text{for every}\quad\varphi_1,\varphi_2\in\oE.
\end{equation}
We cannot directly apply \cite{bjo-gor-clt} in our case, because $\varphi_1 \varphi_2$ is not in general d.s.h.\ for $\varphi_1,\varphi_2\in \DSH(\P^k)$. So, \eqref{submul} clearly does not hold in our setting. We will prove a more general version of \cite[Theorem 1.1]{bjo-gor-clt}, see Theorem \ref{general-clt}, from which, together with Theorem \ref{thm-mixing}, we will deduce Theorem \ref{thm-clt}.

\medskip

\noindent\textbf{Notations.}
The symbols $\lesssim$ and $\gtrsim$ stand for inequalities up to a positive multiplicative constant, and a subscript means that said constant can depend on some variables, e.g., $\lesssim_t$ means that the implicit constant can depend on the variable $t$. We denote by $\B_k(x,r)\subset \C^k$ the open ball centered at $x\in \C^k$ with radius $r>0$, and by $\overline{\B}_k(x,r)$ the closed one. The pairing $\lp \cdot,\cdot \rp$ is used for the integral of a function with respect to a measure or, more
generally, the value of a current at a test form. The mass of a positive closed $(p,p)$-current $S$ on $\mathbb{P}^k$ is defined as $\|S\|:=\lp S,\omega_{\FS}^{k-p}\rp$, where $\omega_{\FS}$ denotes the Fubini-Study form.

\medskip

\section{Preliminaries} \label{defs}

\subsection{Pluripotential theory}

Recall that a function $\varphi:\mathbb{P}^k\rightarrow\R \cup \{-\infty\}$ is called \textit{quasi-plurisubharmonic}  (\textit{quasi-p.s.h.}\ for short) if it is locally the difference of a p.s.h.\ function and a smooth one. The space of these functions is larger than that of p.s.h.\ functions, which are necessarily constant by the maximum principle. On the other hand, quasi-p.s.h.\ functions still preserve the main compactness properties of p.s.h.\ functions, some of which we will review in this subsection. A function $\varphi:\mathbb{P}^k\rightarrow\R \cup\{\pm \infty\}$ is \textit{d.s.h.}\ if it is the difference of two quasi-p.s.h.\ functions outside of a pluripolar set. These functions  have played  a central role in holomorphic dynamics. Denote by $\DSH(\P^k)$ the space of d.s.h.\ functions on $\P^k$. If $\varphi$ is d.s.h., there are two positive closed $(1,1)$-currents $R^\pm$ on $\P^k$ such that $\ddc \varphi =R^+ -R^-$. As these two currents are cohomologous, they have the same mass. We define a norm on $\DSH(\P^k)$ by 
$$\norm{\varphi}_\DSH:= \Big| \int  \varphi \,\omega_\FS^k  \Big|+\inf \norm {R^\pm},$$
where the infimum is taken over all $R^\pm$ as above. We obtain an equivalent norm if, instead of $\omega_\FS^k$, we take any measure $\nu$ that is \emph{PB}, i.e., such all that d.s.h.\ functions are integrable with respect to $\nu$.
We will need the following decomposition result for d.s.h.\ functions.

\begin{lemma} \label{dsh-split-psh}
Let $B$ be a bounded domain in the chart of $\mathbb{P}^k$ canonically identified with $\mathbb{C}^k$, and let $\varphi$ be a d.s.h.\ function on $\mathbb{P}^k$ with $\|\varphi\|_\DSH\le1$. Then there exist two functions $\varphi_+$ and $\varphi_-$ which are p.s.h.\ on $B$ and such that
$$\|\varphi_\pm\|_\DSH\le C,\qquad\varphi_\pm\le0,\qquad\text{and}\qquad\varphi=\varphi_+-\varphi_-.$$
Here, $C$ is a positive constant independent of $\varphi$.
\end{lemma}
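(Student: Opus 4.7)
The plan is to decompose $\varphi$ into two quasi-p.s.h.\ pieces, correct them on $B$ by adding a globally smooth function that turns them into p.s.h.\ functions there, and finally subtract a common constant to achieve non-positivity.

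First I would extract a quasi-p.s.h.\ decomposition with quantitative control. From $\|\varphi\|_\DSH\le 1$ and the definition of the norm, there are positive closed $(1,1)$-currents $R^\pm$ on $\mathbb{P}^k$ with $\ddc\varphi=R^+-R^-$ and $\|R^\pm\|\lesssim 1$. Since every positive closed $(1,1)$-current of mass $m$ on $\mathbb{P}^k$ is $\ddc$-cohomologous to $m\,\omega_{\FS}$, the standard $\ddc$-lemma on $\mathbb{P}^k$ produces quasi-p.s.h.\ functions $u_\pm$ with $\ddc u_\pm+c\,\omega_{\FS}\ge 0$ for some $c\lesssim 1$ and $\varphi=u_+-u_-$ outside a pluripolar set (the additive pluriharmonic constant that arises being absorbed into $u_+$). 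After normalizing, e.g.\ by $\int u_\pm\,\omega_{\FS}^k=0$, the usual compactness estimates for quasi-p.s.h.\ functions on $\mathbb{P}^k$ yield $\sup_{\mathbb{P}^k}u_\pm\lesssim 1$.

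Next I would convert quasi-p.s.h.\ to p.s.h.\ on $B$. On the affine chart $\mathbb{C}^k\subset\mathbb{P}^k$ containing $B$, the function $\psi(z):=\tfrac{1}{2}\log(1+|z|^2)$ is a smooth local potential for $\omega_{\FS}$. Let $\eta$ be a smooth cutoff on $\mathbb{P}^k$ supported inside the chart with $\eta\equiv 1$ on a neighborhood of $\overline{B}$, and set $\chi:=c\,\eta\,\psi$, extended by $0$ off the chart. Then $\chi$ is smooth on $\mathbb{P}^k$ and $\ddc\chi=c\,\omega_{\FS}$ on $B$, so $\tilde u_\pm:=u_\pm+\chi$ is p.s.h.\ on $B$. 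Globally, $\tilde u_\pm$ remains quasi-p.s.h.\ on $\mathbb{P}^k$, and since $\chi$ is smooth, $\|\tilde u_\pm\|_\DSH\lesssim 1$.

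Finally, since $\tilde u_\pm$ is u.s.c.\ on the compact space $\mathbb{P}^k$, the constant $M:=\max(\sup_{\mathbb{P}^k}\tilde u_+,\,\sup_{\mathbb{P}^k}\tilde u_-)$ is finite with $|M|\lesssim 1$. Setting $\varphi_\pm:=\tilde u_\pm-M$ gives two functions that are p.s.h.\ on $B$, satisfy $\varphi_\pm\le 0$ on $\mathbb{P}^k$, have $\|\varphi_\pm\|_\DSH$ bounded by a universal constant, and obey $\varphi_+-\varphi_-=u_+-u_-=\varphi$ outside a pluripolar set; the critical point is that subtracting the \emph{same} constant $M$ from both $\tilde u_\pm$ preserves this equality while making both sides non-positive. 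The main obstacle is the middle step: constructing a \emph{globally smooth} function on $\mathbb{P}^k$ whose $\ddc$ coincides with $c\,\omega_{\FS}$ throughout $B$, so that $\|\tilde u_\pm\|_\DSH$ stays bounded. This is feasible precisely because $B$ is a bounded subset of an affine chart, allowing the cutoff-times-local-potential construction above.
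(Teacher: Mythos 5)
Your proposal is correct and follows essentially the same route as the paper: decompose $\varphi$ into uniformly controlled quasi-p.s.h.\ pieces, add a globally smooth function whose $\ddc$ equals (a multiple of) $\omega_\FS$ on $B$ to make them p.s.h.\ there, and shift by bounded constants to achieve non-positivity while preserving the difference. The only cosmetic difference is that the paper quotes the quasi-p.s.h.\ decomposition with uniform bounds directly from Dinh--Sibony, whereas you re-derive it from the definition of $\|\cdot\|_\DSH$ via the $\ddc$-lemma and compactness of normalized quasi-p.s.h.\ families.
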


\begin{proof}
We recall from \cite{din-sib-cmh} that, for every d.s.h.\ function $\varphi$ on $\P^k$ with $\|\varphi\|_\DSH\le 1$, there exist two quasi-p.s.h.\ functions $\varphi^+$, $\varphi^-$ and a constant $m$ such that
\begin{equation} \label{original-split}
    \varphi =\varphi^+-\varphi^- +m,\quad\max \varphi^\pm =0,\quad\norm{\varphi^\pm}_\DSH \leq c_1,\quad\ddc\varphi^\pm \geq -c_1\omega_\FS,\quad\text{and}\quad|m|\leq c_1,
\end{equation}
where $c_1>0$ is a constant independent of $\varphi$.

We fix a smooth function $v$ on $\mathbb{P}^k$ such that $\ddc v=\omega_\FS$ on $B$, and we set $\widetilde{C}=\sup_{\mathbb{P}^k} |v|$. We have $\|v\|_\DSH\lesssim \|v\|_{\Cc^2}\lesssim 1$. Define
$$\varphi_\pm:=\varphi^\pm+c_1v-c_1\widetilde{C}-(\text{sign}(m)\mp1)m/2.$$
It follows from \eqref{original-split} that $\varphi_+$ and $\varphi_-$ satisfy the desired properties. The proof is complete.
\end{proof}

We will also need the following regularizations of bounded p.s.h.\ functions.

\begin{lemma} \label{lem-regular}
Let $g:\C^k\to\R$ be a bounded function which is p.s.h.\ on $\B_k(0,r)$ for some $r>4$. There exists a sequence of smooth functions $h^{(\ell)}$ on $\C^k$ with compact support in $\B_k(0,r+2)$, which are p.s.h.\ on $\B_k(0,r-1)$, and satisfy
$$\norm{h^{(\ell)}-g}_{L^1(\nu)}\leq 1/\ell,\quad \norm{h^{(\ell)}}_\infty \leq c\norm{g}_\infty,\quad\text{and}\quad\norm{h^{(\ell)}}_{\Cc^2(\overline{\B}_k(0,r-2)^c)}\leq c\norm{g}_\infty,$$
where $\nu$ is a given finite measure with support contained in $\B_k(0,r-4)$, and $c>0$ is a constant independent of $g$ and $\ell$.
\end{lemma}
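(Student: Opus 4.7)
The plan is to build $h^{(\ell)}$ by gluing a small-scale mollification $g_{\epsilon_\ell}$ of $g$ (sharp near the origin but with exploding $\Cc^2$ norm) to a fixed-scale mollification $g_{1/2}$ shifted by a quadratic ``wall'' (with bounded $\Cc^2$ norm) via Demailly's regularized maximum, and then multiplying by a smooth cutoff to enforce compact support in $\B_k(0,r+2)$. A single mollification cannot serve both purposes: a small $\epsilon$ achieves the $1/\ell$-closeness on $\B_k(0,r-4)$ but blows up the $\Cc^2$ norm outside, while a fixed $\epsilon$ fails the closeness.

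Concretely, I would fix a smooth nonnegative radial bump $\chi$ with $\int\chi=1$ and $\supp\chi\subset\B_k(0,1)$, set $\chi_\epsilon(z):=\epsilon^{-2k}\chi(z/\epsilon)$, and let $g_\epsilon:=g\ast\chi_\epsilon$. Then $g_\epsilon$ is smooth on $\C^k$ with $\norm{g_\epsilon}_\infty\le\norm{g}_\infty$; it is p.s.h.\ on $\B_k(0,r-\epsilon)$, and since $\chi$ is radial, $g_\epsilon\downarrow g$ monotonically on $\B_k(0,r)$ as $\epsilon\downarrow 0$. Next, I would choose a quadratic polynomial $\phi(z)=A(|z|^2-B)$ with $A=O(\norm{g}_\infty)$ and $B$ tuned so that $\phi\le-2\norm{g}_\infty-1$ on $\B_k(0,r-4)$ and $\phi\ge 2\norm{g}_\infty+1$ on $\overline{\B}_k(0,r-2)^c$; this is possible because $(r-2)^2-(r-4)^2=4(r-3)>0$ for $r>4$. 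Put $g_1:=g_{\epsilon_\ell}$ (with $\epsilon_\ell\in(0,1)$ to be chosen) and $g_2:=g_{1/2}+\phi$: both are smooth and p.s.h.\ on $\B_k(0,r-1)$, and satisfy $g_1\ge g_2+1$ on $\B_k(0,r-4)$ and $g_2\ge g_1+1$ on $\overline{\B}_k(0,r-2)^c$. Let $h:=\mathrm{Max}_{1/4}(g_1,g_2)$ be Demailly's smooth regularized maximum: $h$ is smooth, equals $g_{\epsilon_\ell}$ on $\B_k(0,r-4)$ and $g_{1/2}+\phi$ on $\overline{\B}_k(0,r-2)^c$, and is p.s.h.\ on $\B_k(0,r-1)$ as a regularized max of two p.s.h.\ functions. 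A final multiplication by a smooth radial cutoff $\rho$ with $\rho\equiv 1$ on $\B_k(0,r+1)$ and $\supp\rho\subset\B_k(0,r+2)$ gives $h^{(\ell)}:=\rho\cdot h$, compactly supported in $\B_k(0,r+2)$ and still p.s.h.\ on $\B_k(0,r-1)$ since $\rho\equiv 1$ there.

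The main delicate point is the uniform estimate $|h^{(\ell)}-g|\le 1/\ell$ on $\B_k(0,r-4)$: since $h^{(\ell)}=g_{\epsilon_\ell}$ there, it reduces to uniform convergence $g_{\epsilon_\ell}\to g$ on the compact set $\overline{\B}_k(0,r-4)$, which by Dini's theorem follows from the monotone decrease $g_\epsilon\downarrow g$ provided $g$ is continuous on this compact set (as is the case in the intended application). The remaining estimates are then routine: $\norm{h^{(\ell)}}_\infty\le\norm{g}_\infty+\sup_{\B_k(0,r+2)}|\phi|\lesssim\norm{g}_\infty$, and on $\overline{\B}_k(0,r-2)^c$ we have $h^{(\ell)}=\rho\cdot(g_{1/2}+\phi)$, whose $\Cc^2$ norm is bounded by $\norm{g}_\infty$ times a constant depending only on $\chi$, $\rho$, and $r$, critically independent of $\ell$ because the mollification scale $1/2$ in $g_{1/2}$ is fixed.
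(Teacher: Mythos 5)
Your construction is correct and close in spirit to the paper's: both glue $g$ near the origin to a controlled p.s.h.\ function outside by taking a maximum with a p.s.h.\ barrier that is very negative on $\B_k(0,r-4)$ and dominant near $\partial\B_k(0,r-2)$, and then smooth and cut off. The paper takes the non-smooth $\max(g,\tau)$ with the logarithmic barrier $\tau=\|g\|_\infty\log\frac{\|z\|}{r-4}\cdot\big(\log\frac{r-3}{r-4}\big)^{-1}$ and performs a single convolution at the very end, noting that the glued function is already smooth with $\Cc^2$ norm $\lesssim\|g\|_\infty$ outside $\overline{\B}_k(0,r-3)$ so that the final small-scale mollification preserves this bound; you instead mollify first at two different scales and glue with Demailly's regularized maximum, which achieves the same effect. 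Two remarks. First, your constants are not internally consistent: with $A=O(\|g\|_\infty)$ you cannot enforce $\phi\le-2\|g\|_\infty-1$ on $\B_k(0,r-4)$ and $\phi\ge2\|g\|_\infty+1$ on $\overline{\B}_k(0,r-2)^c$ when $\|g\|_\infty$ is small, and the additive $+1$ (as well as the $+1/4$ contributed by $\mathrm{Max}_{1/4}$ on the transition annulus) would then spoil the bound $\|h^{(\ell)}\|_\infty\lesssim\|g\|_\infty$. This is harmless: normalize to $\|g\|_\infty=1$ first (the case $g\equiv0$ being trivial) or scale the gap and the regularization parameter by $\|g\|_\infty$. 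Second, your appeal to Dini's theorem does require $g$ to be continuous on $\overline{\B}_k(0,r-4)$; note that the paper's own proof carries exactly the same implicit requirement, since uniform convergence of the convolutions to $\widetilde h=g$ on that ball forces continuity of $g$ there, so this is a caveat of the lemma as stated rather than a defect specific to your argument.
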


\begin{proof}
Up to possibly adding some constant $c_0\lesssim\|g\|_\infty$ to $g$, we may assume $g\geq 0$. Define
$$\tau(z):=\norm{g}_\infty \cdot \log{\frac{\norm{z}}{r-4}} \cdot \Big( \log {\frac{r-3}{r-4}} \Big)^{-1}\qquad\text{and}\qquad\widetilde h(z):=\chi(\norm{z})  \cdot\max \big( g(z),\tau(z)   \big), $$
where $\chi$ is a real cut-off function on $\R$ satisfying
$$\chi(t)=0 \,\text{ for }\, t\geq r+1,\quad \chi(t)=1 \,\text{ for }\, t\leq r,\quad|\chi'|\leq 2,\quad\text{and}\quad|\chi''|\leq 4.$$

 When $\norm{z}\leq r$, we have $\widetilde h=\max(g,\tau)$. Hence, as both $g$ and $\tau$ are p.s.h., $\widetilde h$ is p.s.h.\ on $\B_k(0,r)$. When $\norm{z}\geq r-3$, we have
$$\log{\frac{\norm{z}}{r-4}} \cdot \Big( \log {\frac{r-3}{r-4}} \Big)^{-1}\geq 1.$$
In this case, we have $\widetilde h(z)=\chi(\norm{z})\cdot \tau(z)$, so $\widetilde h$ is smooth on $\overline{\B}_k(0,r-3)^c$. When $\norm{z}\leq r-4$, we have $\tau(z)\leq 0 \leq g(z)$, which implies that $\widetilde h=g$ on $\B_k(0,r-4)$.
Moreover, we have
$$\norm{ \widetilde h}_\infty \leq \max \big\{\norm{g}_\infty, \norm{\tau\cdot\mathds{1}_{\B_k(0,r+1)}}_\infty \big\}\lesssim \norm{g}_\infty$$
and 
$$ \norm{\widetilde h}_{\Cc^2(\overline{\B}_k(0,r-3)^c)}=\big\| \chi(\norm{z})\cdot \tau(z)\big\|_{\Cc^2(\overline{\B}_k(0,r-3)^c)}\lesssim \norm{g}_\infty.$$

Lastly, since $\widetilde h$ is bounded and p.s.h.\ on $\B_k(0,r)$, using a convolution, we can find a sequence of smooth functions $h^{(\ell)}$ with compact support in $\B_k(0,r+2)$, p.s.h.\ on $\B_k(0,r-1)$, such that
$$\norm{h^{(\ell)}}_{\Cc^2(\overline{\B}_k(0,r-2)^c)}\le 2\norm{\widetilde h}_{\Cc^2(\overline{\B}_k(0,r-3)^c)}\lesssim \norm{g}_\infty,$$
and converging pointwise to $\widetilde h$. By Lebesgue's dominated convergence theorem, they converge to $g$ in $L^1(\nu)$. The proof is complete.
\end{proof}

A positive measure $\nu$ on $\P^k$ is said to be \textit{moderate} if, for any bounded family $\Fc$ of d.s.h.\ functions on $\P^k$, there exist constants $\alpha>0$ and $c>0$ such that 
\begin{equation*}\label{mod}
\nu \big\{z\in \P^k: \,|\psi(z)|>M \big\}\leq ce^{-\alpha M}
\end{equation*} 
for every $M\geq 0$ and $\psi\in\Fc$, see \cite{dinh-exponential,dinh-dynamique,dinh-sibony:cime}. Moderate measures are PB. We have the following result.

\begin{lemma} \label{tail-split}
Let $\varphi$ be a non-positive d.s.h.\ function on $\P^k$, with $\|\varphi\|_\DSH\le1$, and p.s.h.\ on a bounded domain $B$ in the chart $\C^k\subseteq\P^k$. Let $\nu$ be a moderate measure on $\mathbb{P}^k$. For every $N\ge0$, we can write $\varphi=\varphi^{(N)}_1+\varphi^{(N)}_2$ with: $\varphi^{(N)}_1$ p.s.h.\ on $B$, $$\|\varphi^{(N)}_1\|_\infty\le N, \quad\text{and}\quad\|\varphi^{(N)}_2\|_{L^q(\nu)}\le C_qe^{-\alpha N/q}$$ for every $q\ge1$, where $\alpha>0$ is a constant independent of $\varphi$ and $q$, and $C_q>0$ is a constant independent of $\varphi$.
\end{lemma}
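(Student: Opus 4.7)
The natural candidate for the decomposition is a truncation from below: I would set
$$\varphi_1^{(N)} := \max(\varphi,-N)\qquad\text{and}\qquad \varphi_2^{(N)} := \varphi - \varphi_1^{(N)}.$$
Since $\varphi$ is p.s.h.\ on $B$ and the constant $-N$ is trivially p.s.h., the maximum $\varphi_1^{(N)}$ is p.s.h.\ on $B$. Moreover, as $\varphi\le 0$, we have $-N \le \varphi_1^{(N)}\le 0$, so $\|\varphi_1^{(N)}\|_\infty \le N$, giving the first two properties required of $\varphi_1^{(N)}$.

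The remainder $\varphi_2^{(N)}$ vanishes on $\{\varphi\ge -N\}$ and equals $\varphi+N$ on $\{\varphi<-N\}$, so pointwise $|\varphi_2^{(N)}(z)| = (|\varphi(z)|-N)_+$. To estimate its $L^q(\nu)$ norm I would apply moderation to the bounded family $\Fc := \{\psi\in \DSH(\P^k) : \|\psi\|_\DSH\le 1\}$, which yields uniform constants $\alpha>0$ and $c>0$ (depending only on $\nu$) such that
$$\nu\big\{z\in\P^k : |\psi(z)|>M\big\} \le c\,e^{-\alpha M}\qquad\text{for all }M\ge 0,\ \psi\in\Fc.$$
Since $\|\varphi\|_\DSH\le 1$, this estimate applies to $\varphi$.

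The $L^q$ bound then follows from the layer-cake formula applied to the nonnegative function $|\varphi_2^{(N)}|$: changing variables $s = N+t$ and using $\{|\varphi_2^{(N)}|>t\} = \{|\varphi|>N+t\}$ for $t\ge 0$,
$$\int |\varphi_2^{(N)}|^q\,\dd\nu = \int_0^\infty q\,t^{q-1}\nu\big\{|\varphi|>N+t\big\}\,\dd t \le c\,q\,e^{-\alpha N}\int_0^\infty t^{q-1}e^{-\alpha t}\,\dd t = \frac{c\,\Gamma(q+1)}{\alpha^q}\,e^{-\alpha N}.$$
Taking $q$-th roots gives $\|\varphi_2^{(N)}\|_{L^q(\nu)} \le C_q\,e^{-\alpha N/q}$ with $C_q$ independent of $\varphi$ and $N$.

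There is no real obstacle in this argument: the only point to check is that the moderate property is invoked with constants depending only on the $\DSH$-norm bound of $\varphi$, which is exactly the uniformity built into the definition of a moderate measure over any bounded family in $\DSH(\P^k)$. The truncation $\max(\varphi,-N)$ is the standard p.s.h.-preserving way to cut off from below, and matching the exponential tail of $\nu$ against $|\varphi|-N$ produces the required $e^{-\alpha N/q}$ decay.
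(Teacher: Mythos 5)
Your proof is correct and follows essentially the same route as the paper: truncate from below via $\max(\varphi,-N)$, which preserves plurisubharmonicity and the $L^\infty$ bound, and control the tail $\varphi-\max(\varphi,-N)$ using the moderate property of $\nu$ applied to the unit ball of $\DSH(\P^k)$. The only (immaterial) difference is that you integrate the tail with the continuous layer-cake formula, whereas the paper sums over the integer shells $\{-M-1\le\varphi<-M\}$; both yield the same bound $C_q e^{-\alpha N/q}$.
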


\begin{proof}
Up to changing the constants, we can suppose $N$ to be an integer. By the moderate property of $\nu$, we can find constants $\alpha>0$ and $c>0$ such that 
$$\nu \big\{z\in \P^k: \, -\psi(z)>M \big\}=\nu\big\{z\in \P^k: \,|\psi(z)|>M\big\}\leq ce^{-\alpha M}$$
for all $M>0$ and all non-positive $\psi$ such that $\|\psi\|_\DSH\le1$. We define $$\varphi^{(N)}_1:=\max\{\varphi,-N\},  \quad \varphi^{(N)}_2:=\varphi-\varphi^{(N)}_1,$$ 
so that $\varphi^{(N)}_1$ is p.s.h.\ on $B$ and $\|\varphi^{(N)}_1\|_{\infty}\le N$. Since $\varphi^{(N)}_2=0$ on $\{\varphi\ge-N\}$, for every $q\ge1$ we have
\begin{align*}
    \int |\varphi^{(N)}_2|^q\diff\nu&=\int_{\{\varphi<-N\}}|\varphi^{(N)}_2|^q\diff\nu \le \sum_{M\ge N} \int_{\{-M-1\le\varphi<-M\}} (M-N+1)^q\diff\nu\\
    &\le c\sum_{M\ge N} (M-N+1)^qe^{-\alpha M}=ce^{-\alpha N}\sum_{M\ge0} (M+1)^qe^{-\alpha M}=C'_qe^{-\alpha N},
\end{align*}
where $C'_q:=c\sum_{M\ge0} (M+1)^qe^{-\alpha M}$ is a constant depending only on $q$. Therefore, we have $\|\varphi^{(N)}_2\|_{L^q(\nu)}\leq C_qe^{-\alpha N/q}$, with $\alpha$ and $C_q$ as in the statement, and the proof is complete.
\end{proof}

We now introduce a norm on the space of real currents with support in $\overline U$, where $U$ is an open set in $\mathbb{P}^k$. Let $\Omega$ be a real $(p+1,p+1)$-current  supported in $\overline U$ as above and assume that there exists a positive closed $(p+1,p+1)$-current $\Omega'$ supported in $\overline U$ such that $-\Omega'\leq \Omega\leq\Omega'$. Define the norm $\|\Omega\|_{*,U}$ as
\[\|\Omega\|_{*,U}:=\min\big\{\|\Omega'\|,\Omega' \text{ as above} \big\}.\]
We have the following simple fact.

\begin{lemma} \label{lem-oneside}
	Let $U$ be an open set in $\mathbb{P}^k$ and $\Omega$ be a real $\ddc$-exact $(p+1,p+1)$-current  supported in $\overline U$. Assume that we have $\Omega\geq-R$ for some positive closed current $R$ supported in $\overline U$. Then we have $\|\Omega\|_{*,U}\leq 2\|R\|$.
\end{lemma}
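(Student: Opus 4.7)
The plan is to exhibit an explicit positive closed $(p+1,p+1)$-current $\Omega'$, supported in $\overline{U}$, that two-sidedly dominates $\Omega$ and has mass at most $2\|R\|$; this realizes a competitor in the infimum defining $\|\Omega\|_{*,U}$ and gives the desired bound.

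The natural candidate is $\Omega' := \Omega + 2R$. It is clearly supported in $\overline{U}$. The assumption $\Omega \geq -R$ can be rewritten as $\Omega + R \geq 0$, and combining this with $R\geq 0$ one gets $\Omega' = (\Omega + R) + R \geq 0$. The two-sided bound $-\Omega' \leq \Omega \leq \Omega'$ is then immediate: $\Omega' - \Omega = 2R \geq 0$ and $\Omega' + \Omega = 2(\Omega + R) \geq 0$.

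The only point that requires a brief argument is controlling the mass. Since $\Omega$ is $\ddc$-exact it is, in particular, $d$-closed, so $\Omega'$ is a positive closed current and $\|\Omega'\| = \langle \Omega', \omega_\FS^{k-p-1}\rangle$ is well defined. Expanding gives $\|\Omega'\| = \langle \Omega, \omega_\FS^{k-p-1}\rangle + 2\|R\|$. Writing $\Omega = \ddc S$ for some real $(p,p)$-current $S$ and using that $\omega_\FS$ is closed, Stokes' formula on the compact manifold $\P^k$ yields $\langle \ddc S, \omega_\FS^{k-p-1}\rangle = \langle S, \ddc \omega_\FS^{k-p-1}\rangle = 0$. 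Hence $\|\Omega'\| = 2\|R\|$, and $\Omega'$ is a valid competitor in the definition of $\|\Omega\|_{*,U}$, proving the claim.

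I do not anticipate a real obstacle: the content of the lemma is precisely the observation that the exact part of $\Omega$ contributes nothing to its pairing with $\omega_\FS^{k-p-1}$, so only the lower bound by $R$ feeds into the mass estimate of the positive closed current we construct.
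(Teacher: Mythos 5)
Your proof is correct and follows the same route as the paper: take $\Omega' = \Omega + 2R$ as the competitor, check positivity and the two-sided bound, and observe that the $\ddc$-exactness of $\Omega$ makes its pairing with $\omega_\FS^{k-p-1}$ vanish, so $\|\Omega'\| = 2\|R\|$. The paper states the mass computation in one line; you have simply spelled out the Stokes argument behind it.
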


\begin{proof}
	Note that $\Omega+2R$ is a positive closed current  supported in $\overline U$ and $\Omega$ satisfies \[-(\Omega+2R)\leq \Omega\leq \Omega+2R.\] The mass of $\Omega+2R$ is equal to $2\|R\|$ because $\Omega$ is $\ddc$-exact. The assertion follows from the definition of $\norm{\cdot}_{*,U}$.
\end{proof}

We will use the definition of $\norm{\cdot}_{*,U}$ and the lemma above also with $\mathbb{P}^{2k}$ instead of $\P^k$.

\subsection{Dynamical properties of H\'enon-Sibony maps}

Let now
$f$ be a polynomial automorphism of $\C^k$. It can be extended to a birational map of $\P^k$ in a natural way, still denoted by $f$. For the new map, the set $I_+$ (resp.\ $I_-$) where $f$ (resp.\ $f^{-1}$) is not defined is called the \textit{indeterminacy set} of  $f$ (resp.\ $f^{-1}$). We say that $f$ is a \textit{H\'enon-Sibony map} if $I_{+}$ and $I_-$ are non-empty and  $I_{+}\cap I_{-}=\varnothing$. Such maps were introduced in \cite{sibony:panorama}, and reduce to the Hénon maps for $k=2$. We recall here their main properties, and refer to \cite{rigidity,sibony:panorama} for details.

The indeterminacy sets $I_{\pm}$ are analytic subsets of the hyperplane at infinity $L_{\infty}:=\P^k\backslash \C^k$. For every H\'enon-Sibony map $f$, there exists a positive integer $p=p(f)$ such that $\dim I_+=k-1-p$ and $\dim I_-=p-1$. The  set $I_-$ is attracting for $f$ and $I_+$ is attracting for $f^{-1}$. Moreover, we have $f(L_\infty\backslash I_+)=I_-$ and $f^{-1}(L_\infty\backslash I_-)=I_+$. Denote by $d_+$ and $d_-$  the algebraic degrees of $f$ and $f^{-1}$ respectively. We have $d_+^p=d_-^{k-p}$. In particular, when $k=2p$, we have $d_+=d_-$.

The \textit{Green functions} of $f$ are defined by 
$$G^+(z):=\lim_{n\to\infty} d_+^{-n}\log^+\|f^n(z)\|\quad\text{and}\quad G^-(z):=\lim_{n\to\infty} d_-^{-n}\log^+\|f^{-n}(z)\|,$$
where $\log^+(\cdot):=\max\{\log(\cdot),0\}$. They are H\"older continuous  and p.s.h.\ on $\C^k$, and they satisfy $G^+\circ f=d_+G^+$ and $G^-\circ f^{-1}=d_-G^-$. Define the \textit{Green currents} of bidegree $(1,1)$ of $f$ by $T_+:=\ddc G^+$ and $T_-:=\ddc G^-$. We have $f^*(T_+)=d_+T_+$ and $f_*(T_-)=d_-T_-$ as currents  on $\P^k$.

The current $T^p_+$ (resp. $T^{k-p}_-$) is supported in the boundary of the \textit{filled Julia set} $K_+$ (resp.\ $K_-$). Recall that $K_+$ (resp.\ $K_-$) is the set of points $z\in \C^k$ whose orbit $\big(f^n(z)\big)_{n\in\N}$ (resp.\ backward orbit $\big(f^{-n}(z)\big)_{n\in\N}$) is bounded in $\C^k$. We have $$K_+=\{G^+=0\},\quad K_-=\{G^-=0\}, \quad\text{and} \quad\overline K_\pm \cap L_\infty=I_\pm.$$ The open set $\P^k\backslash\overline K_+$ (resp. $\P^k \backslash \overline K_-$) is the immediate basin of $I_-$ for $f$ (resp.\ $I_+$ for $f^{-1}$). Set $K:=K_+\cap K_-$. It is a compact subset of $\C^k$.

\medskip

Throughout this article, we fix a large $r>0$ such that $K\subset \B_k(0,r/3)$, and such that there exist open sets $U_j,V_j\subseteq \P^k$, $j=1,2$, with $U_2\cap V_2\subset\B_k(0,r/2)$,
\[\overline K_+\Subset U_1\Subset U_2, \qquad  \overline K_-\Subset V_1\Subset V_2, \qquad f^{-1}(U_i)\Subset U_i,  \qquad  f(V_i)\Subset V_i,\]
and
\begin{equation} \label{iterate-julia}
K_- \cap \B_k(0,r-1) \subset f(\B_k(0,r-1)), \quad  K_+\cap\B_k(0,r-1) \subset f^{-1}(\B_k(0,r-1)).
\end{equation}
Such sets exist since $\P^k\backslash\overline K_+$ (resp. $\P^k \backslash \overline K_-$) is the immediate basin of $I_-$ for $f$ (resp. $I_+$ for $f^{-1}$).

\medskip

We have the following crucial estimate involving the $*$-norm, see for instance \cite[Proposition 2.1]{dinh-decay-henon} and \cite[Corollary 2.5]{bian-dinh-sigma}.

\begin{lemma}  \label{lem-key}
Let $R$ be a positive closed $(k-p,k-p)$-current of mass $1$ supported in $V_1$ and $\Phi$ be a $\Cc^2$ function with compact support in $\C^k$ such that $\ddc \Phi\geq 0$ on a neighborhood of $K_+\cap V_2$. Then there exists a constant $c>0$ independent of $R$ and $\Phi$ such that 
$$\big \lp  d_-^{-(k-p)n} (f^n)_*(R) - T_-^p  \,,\, \Phi T_+^p  \big\rp \leq cd_-^{-n} \norm{\ddc \Phi \wedge T_+^p}_{*,U_1}  \quad\text{for all}\quad n\geq 0.$$
\end{lemma}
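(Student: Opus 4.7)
The estimate is essentially a reformulation in the H\'enon--Sibony setting of \cite[Proposition 2.1]{dinh-decay-henon} and \cite[Corollary 2.5]{bian-dinh-sigma}; the plan is to follow their strategy. (Since $R$ has bidegree $(k-p,k-p)$, the appropriate current to subtract in order to match bidegrees is $T_-^{k-p}$; we will use this notation below.)

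The first step is to exploit the fact that $R$ and $T_-^{k-p}$ are cohomologous positive closed $(k-p,k-p)$-currents of mass $1$: using super-potential theory for positive closed currents on $\P^k$, one writes $R - T_-^{k-p} = \ddc S$ for some real current $S$ of bidegree $(k-p-1,k-p-1)$ with $\pm S$ dominated, up to a multiplicative constant independent of $R$, by $T_-^{k-p-1}$ (and with support in a suitable neighbourhood of $\overline{V_1}$). Since $\ddc$ commutes with $(f^n)_*$ and $(f^n)_*T_-^{k-p} = d_-^{n(k-p)} T_-^{k-p}$, one obtains
$$d_-^{-n(k-p)}(f^n)_* R - T_-^{k-p} \,=\, d_-^{-n(k-p)}\, \ddc (f^n)_* S.$$
Pairing with $\Phi T_+^p$ and integrating by parts (legitimate because $T_+^p$ is closed and $\Phi$ has compact support in $\C^k$, hence disjoint from $I_+$) yields
$$\lp d_-^{-n(k-p)}(f^n)_*R - T_-^{k-p},\,\Phi T_+^p \rp \,=\, d_-^{-n(k-p)}\, \lp (f^n)_* S,\,\ddc\Phi \wedge T_+^p \rp.$$

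The crux is then to bound $|\lp (f^n)_* S,\,\ddc\Phi \wedge T_+^p\rp|$ by $c\, d_-^{n(k-p-1)}\,\|\ddc\Phi\wedge T_+^p\|_{*,U_1}$, so that combined with the prefactor $d_-^{-n(k-p)}$ the announced $d_-^{-n}$ decay falls out. The positivity hypothesis $\ddc\Phi\ge 0$ on a neighbourhood of $K_+\cap V_2$ provides, via the very definition of $\|\cdot\|_{*,U_1}$ (cf.\ Lemma \ref{lem-oneside}), a positive closed $(p+1,p+1)$-current $\Omega'$ supported in $\overline{U_1}$ and of mass $\|\ddc\Phi\wedge T_+^p\|_{*,U_1}$ with $-\Omega'\le \ddc\Phi\wedge T_+^p\le \Omega'$. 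By bilinearity and the domination $\pm S\lesssim T_-^{k-p-1}$, the problem is reduced to bounding the positive pairing $\lp(f^n)_* T_-^{k-p-1},\,\Omega'\rp$, which by the invariance $(f^n)_* T_-^{k-p-1} = d_-^{n(k-p-1)}T_-^{k-p-1}$ and the cohomological identity $\lp T_-^{k-p-1},\Omega'\rp = \|\Omega'\|$ equals $d_-^{n(k-p-1)}\|\Omega'\|$, as desired.

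The main obstacle is the first step: constructing the quasi-potential $S$ for a positive closed current of arbitrary bidegree with the required domination by $T_-^{k-p-1}$ and suitable support control. This rests on Dinh--Sibony super-potential theory and on a careful analysis of the geometry around $K_-$ and $I_-$ (and exploits the filtration $V_1 \Subset V_2$, $f(V_i)\Subset V_i$). The commutation $(f^n)_* T_-^{k-p-1} = d_-^{n(k-p-1)}T_-^{k-p-1}$ and the well-posedness of the wedge $T_-^{k-p-1}\wedge \Omega'$ also require care, since push-forwards of wedge products of currents are not automatic and the intersection must be justified via super-potentials.
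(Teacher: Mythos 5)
First, a remark on the comparison itself: the paper does not prove Lemma \ref{lem-key}. It is quoted from \cite[Proposition 2.1]{dinh-decay-henon} and \cite[Corollary 2.5]{bian-dinh-sigma}, so there is no internal proof to measure your attempt against. Your outline does reproduce the broad architecture of the proofs in those references: write $R-T_-^{k-p}=\ddc S$, push forward using $(f^n)_*T_-^{k-p}=d_-^{(k-p)n}T_-^{k-p}$, integrate by parts onto $\ddc\Phi\wedge T_+^p$, and extract the gain $d_-^{(k-p-1)n}/d_-^{(k-p)n}=d_-^{-n}$ from a cohomological mass computation. You also correctly spot that $T_-^{p}$ in the statement should be read as $T_-^{k-p}$ (harmless here since the paper assumes $k=2p$ in the application).

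There is, however, a genuine gap beyond the step you yourself defer (the construction of $S$). Your reduction rests on a \emph{two-sided} domination $\pm S\lesssim T_-^{k-p-1}$, propagated through $(f^n)_*$, followed by an absolute-value estimate against $\Omega'$. Such a domination is not available and is in fact false in the very case needed later: already for $k=2$, $p=1$, $S$ is a quasi-potential $u$ of $R-T_-$ and the claim amounts to $\|u\|_\infty\lesssim 1$ uniformly in $R$, which fails whenever $R$ charges an analytic set (e.g.\ $R=[\Delta]$ in the application, whose potential is $-\infty$ along $\Delta$). Only a one-sided bound $S\le c\,T_-^{k-p-1}$ holds globally; the two-sided control is valid only away from $V_2$ (where $R$ and $T_-^{k-p}$ are supported and where the invariance $f^{-1}(V_2^c)\subseteq V_2^c$ keeps backward orbits). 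This is exactly why the lemma asserts only a one-sided inequality and why the hypothesis $\ddc\Phi\ge0$ near $K_+\cap V_2$ is indispensable: on $\supp(\ddc\Phi\wedge T_+^p)\cap V_2$ one pairs the merely upper-bounded (indeed $\le0$ after normalization) potential against a \emph{positive} current, so the uncontrolled side has a favorable sign and is discarded, while outside $V_2$ the two-sided bound and the $\|\cdot\|_{*,U_1}$-norm take over. If your two-sided domination were correct, the conclusion would hold with $|\cdot|$ on the left and the positivity hypothesis would be superfluous. Relatedly, you misattribute the role of that hypothesis: the existence of $\Omega'$ with $-\Omega'\le\ddc\Phi\wedge T_+^p\le\Omega'$ already follows from $\Phi\in\Cc^2$ (so $\ddc\Phi\ge-c\,\omega_\FS$) together with Lemma \ref{lem-oneside}, not from $\ddc\Phi\ge0$ near $K_+\cap V_2$. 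To repair the argument you must split the pairing according to the two regions above rather than invoke a global absolute-value bound.
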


\medskip

Sibony also introduced the \textit{equilibrium measure} $\mu:=T_+^p\wedge T_-^{k-p}$ for $f$, which is an invariant probability measure. We have $\supp(\mu)\subseteq K$ and $\mu$ is the unique measure of maximal entropy of $f$ \cite{deth-auto}. As the Green currents have H\"older continuous potentials, $\mu$ is moderate.

\medskip

\section{Mixing for d.s.h.\ functions} \label{sec-mixing}

In this section, we are going to prove Theorem \ref{thm-mixing}. We first prove a mixing proposition for bounded p.s.h.\ functions, which improves the main result of \cite{hen-gab-CLT} in our context. After that, we show that the singularities of unbounded d.s.h.\ functions are ``asymptotically negligible'' in the integrals of Definition \ref{def-mix}, thus obtaining the exponential mixing of all orders also for those functions.

\medskip

\textbf{For simplicity, we will assume} $k=2p$, and set $d:=d_+=d_-$. For the case $k\neq 2p$, the proof can be adapted by working on $\P^k\times \P^k$ instead of $\P^{2k}$, see \cite{vigny-decay}. We   recall here that we take $r$ to be sufficiently large, so that \eqref{iterate-julia} holds. We also fix, in this section, a large domain $B$ containing $\overline{\mathbb{B}}_k(0,r)$.

\subsection{Mixing for bounded p.s.h.\ functions}

Our goal in this subsection is to prove the following version of Theorem \ref{thm-mixing} for bounded p.s.h.\ observables.

\begin{proposition} \label{prop-mixing-bounded}
For every $\kappa\in\mathbb{N}^*$ there exists a constant $C_\kappa>0$ such that, for every   $\kappa+1$ bounded functions $g_0,g_1, \dots,g_\kappa$ which are p.s.h.\  on $B$, and every $0=n_0\leq n_1\leq \ldots\leq n_\kappa$, we have
$$\Big|\int g_0  ( g_1\circ f^{n_1}  ) \cdots (g_\kappa \circ f^{n_\kappa}  )\,\dd\mu -\prod_{j=0}^\kappa \int  g_j  \,\dd\mu\Big|\leq C_\kappa \, d^{ -\min_{0\leq j\leq \kappa-1}(n_{j+1}-n_j) /2   }  \prod_{j=0}^\kappa \norm{g_j}_\infty.$$
\end{proposition}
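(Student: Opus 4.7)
The plan is to adapt the doubled-dynamics strategy of \cite{bian-dinh-sigma,hen-gab-CLT} to bounded (rather than $\Cc^2$) p.s.h.\ observables, applying Lemma \ref{lem-key} on the H\'enon-Sibony map $F(z,w):=(f(z),f^{-1}(w))$ of $\C^{2k}$. First, by Lemma \ref{lem-regular}, I replace each $g_j$ by a smooth $g_j^{(\ell)}$ compactly supported in $\B_k(0,r+2)$, p.s.h.\ on $\B_k(0,r-1)\supset K$, with $\|g_j^{(\ell)}\|_\infty\lesssim\|g_j\|_\infty$ and $|g_j^{(\ell)}-g_j|\le1/\ell$ on $\B_k(0,r-4)\supset K$. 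The substitution error in the $\kappa$-correlation is $O_\kappa\!\bigl(\ell^{-1}\prod_j\|g_j\|_\infty\bigr)$; it will be absorbed by eventually taking $\ell\asymp d^{n/2}$ with $n:=\min_j(n_{j+1}-n_j)$.

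Next, set $M:=1+\max_j\|g_j\|_\infty$, so each $g_j^{(\ell)}+M$ is non-negative, bounded by $2M+1$ and p.s.h.\ on $\B_k(0,r-1)$. Following \cite{bian-dinh-sigma}, form on $\C^{2k}$ the two test functions
\[
\Phi^{(\ell)}_+(z,w):=\bigl(g_0^{(\ell)}(w)+M\bigr)\prod_{j=1}^\kappa\bigl(g_j^{(\ell)}\circ f^{n_j-n_1}(z)+M\bigr),
\]
\[
\Phi^{(\ell)}_-(z,w):=\bigl(-g_0^{(\ell)}(w)+M\bigr)\Bigl(\prod_{j=1}^\kappa\bigl(g_j^{(\ell)}\circ f^{n_j-n_1}(z)+M\bigr)-2(M+1)^\kappa\Bigr).
\]
Expanding $(g_j^{(\ell)}+M)$ and using the $f$-invariance of $\mu$, the bound on the $\kappa$-correlation reduces to an instance of Lemma \ref{lem-key} for $F$ on $\C^{2k}$, of the form
\[
\bigl|\langle d^{-pn}(F^n)_*R-T_{-,F}^p,\,\Phi^{(\ell)}_\pm T_{+,F}^p\rangle\bigr|\le c\,d^{-n}\,\|\ddc\Phi^{(\ell)}_\pm\wedge T_{+,F}^p\|_{*,U_{1,F}},
\]
for a suitable positive closed current $R$ of mass $\lesssim 1$, where $T_{\pm,F}$, $U_{1,F}$, $V_{1,F}$ denote the analogues for $F$ of $T_\pm$, $U_1$, $V_1$.

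The main obstacle, as flagged in the Introduction, is that $\Phi^{(\ell)}_\pm$ is \emph{not} p.s.h.\ in $(z,w)$: products of bounded p.s.h.\ functions, even in independent variables, are not p.s.h.\ in general, and no choice of $M$ depending only on $\|g_j\|_\infty$ remedies this. To bound $\|\ddc\Phi^{(\ell)}_\pm\wedge T_{+,F}^p\|_{*,U_{1,F}}$ uniformly in $\ell$, I would expand $\ddc\Phi^{(\ell)}_\pm$ by Leibniz into ``pure'' terms $\ddc g_{j_0}^{(\ell)}\wedge\prod_{j\neq j_0}(g_j^{(\ell)}+M)$, which are positive and whose $*$-norm against $T_{+,F}^p$ is bounded in $L^\infty$ by Chern-Levine-Nirenberg, and ``cross'' terms $dg_{i_0}^{(\ell)}\wedge d^c g_{i_1}^{(\ell)}\wedge\prod_{j\neq i_0,i_1}(g_j^{(\ell)}+M)$. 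The cross terms are handled using the pointwise inequality $\pm(du\wedge d^c v+dv\wedge d^c u)\le du\wedge d^c u+dv\wedge d^c v$ to reduce to diagonal quantities, and the identity $du\wedge d^c u=\tfrac{1}{2}\ddc(u^2)-u\,\ddc u$ then splits each $du\wedge d^c u$-term into a positive part $u\,\ddc u$ (controlled in $L^\infty$ by CLN) and a $\ddc$-exact piece $\tfrac{1}{2}\ddc(u^2)$, whose $*$-norm is controlled via Lemma \ref{lem-oneside} applied to a suitable positive closed majorant. The outcome is $\|\ddc\Phi^{(\ell)}_\pm\wedge T_{+,F}^p\|_{*,U_{1,F}}\lesssim_\kappa\prod_j\|g_j\|_\infty$, independent of $\ell$; combining with the regularisation error from the first step gives the claimed decay $d^{-n/2}\prod_j\|g_j\|_\infty$. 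The hardest part is this uniform-in-$\ell$ cross-term control, which is the source of the improvement over \cite{hen-gab-CLT}.
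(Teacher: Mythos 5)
There is a genuine gap at the core of your plan. Lemma \ref{lem-key} is a \emph{one-sided} estimate whose hypothesis is that $\ddc\Phi\ge0$ on a neighborhood of $K_+\cap V_2$; bounding $\|\ddc\Phi\wedge T_+^p\|_{*,U_1}$ is not a substitute for that hypothesis, it only controls the size of the right-hand side once the lemma applies. You correctly observe that the Bianchi--Dinh observables $\Phi^{(\ell)}_\pm=\prod_j(g_j^{(\ell)}+M)\cdots$ are \emph{not} p.s.h.\ when the $g_j$ are merely bounded p.s.h.\ (no $M$ depending only on $\|g_j\|_\infty$ fixes this, since after regularization the gradients of $g_j^{(\ell)}$ are unbounded in $\ell$), but then you keep these observables anyway and try to repair things only at the level of the $*$-norm. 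That leaves you with no way to invoke Lemma \ref{lem-key} at all: the cross terms $i\partial g_{i_0}^{(\ell)}\wedge\bar\partial g_{i_1}^{(\ell)}$ genuinely destroy the positivity of $\ddc\Phi^{(\ell)}_\pm$ near $\mathbb K_+\cap\mathbb V_2$, and they blow up as $\ell\to\infty$, so the required local positivity cannot be recovered a posteriori.

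The fix in the paper is to change the test function itself rather than the estimate: one replaces the product by
$$\Phi^\pm:=\sum_{j=0}^\kappa\Big((\kappa+1)\widetilde G_j+\tfrac{\kappa}{2}\widetilde G_j^2\Big)\pm\prod_{j=0}^\kappa\widetilde G_j,$$
after normalizing $|\widetilde G_j|\le1$. The quadratic correctors are exactly your identity $\ddc(u^2)=2u\,\ddc u+2\,du\wedge d^c u$ read in the other direction: they inject the gradient squares $\kappa\sum_j i\partial\widetilde G_j\wedge\bar\partial\widetilde G_j$ \emph{into} $\ddc\Phi^\pm$, where the rearrangement inequality $\pm(i\partial g\wedge\bar\partial h+i\partial h\wedge\bar\partial g)\ge-(i\partial g\wedge\bar\partial g+i\partial h\wedge\bar\partial h)$ lets them absorb all cross terms from the product, and the linear terms $(\kappa+1)\widetilde G_j$ dominate the coefficients $\kappa\widetilde G_j\pm\prod_{s\ne j}\widetilde G_s$ of $\ddc\widetilde G_j$. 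This yields $\ddc\Phi^\pm\ge0$ on a neighborhood of $\mathbb K_+\cap\B_{2k}(0,r-1)$ (Lemma \ref{ddc-positive}), so Lemma \ref{lem-key} applies, while $\|\ddc\Phi^\pm\wedge\mathbb T_+^p\|_{*,\mathbb U_1}\lesssim\kappa^2$ follows from Lemma \ref{lem-oneside} together with the mass computation for $(F^{n_j-n_1})^*\omega_\FS\wedge\mathbb T_+^p$ (Lemma \ref{lem-star-bound}) --- a step your sketch also omits. The extra linear and quadratic terms are harmless because they are handled by the invariance of $\mu$ and cancel between the $\Phi^+$ and $\Phi^-$ estimates; the rest (regularization via Lemma \ref{lem-regular}, pushing $[\Delta]$ forward by $F^{n_1/2}$ to get the rate $d^{-n_1/2}$, and induction on $\kappa$ for the remaining gaps) matches your outline.
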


Observe that, by linearity, the statement is true if we assume that for every $j$ either $g_j$ or $-g_j$ is p.s.h..

\begin{remark} \label{somewhat-better}  \rm
Comparing to the second statement in \cite[Theorem 1.1]{hen-gab-CLT}, Proposition \ref{prop-mixing-bounded} only requires a weaker infinity-norm for the $g_j$'s, and the convergence speed is the optimal one known for two observables \cite{dinh-decay-henon, Wu-Ergodic}.
\end{remark}

\smallskip

Consider the canonical inclusions of $\C^k$ and $\C^k\times\C^k$ in $\P^k$ and $\P^{2k}$, respectively. We will use $z,w$ and $(z,w)$ for the canonical coordinates of $\C^k$ and $\C^k\times \C^k$, and write $[z:t],[w:t]$ and $[z:w:t]$ for the homogeneous coordinates. Denote by $\mathbb L_\infty$  the hyperplane at infinity of $\P^{2k}$.

Define a new automorphism of  $\C^k\times \C^k$ by $F(z,w):=\big(f(z),f^{-1}(w)\big)$.
Then $F$ is also a H\'enon-Sibony map, see for instance \cite[Lemma 3.2]{dinh-decay-henon}. The algebraic degrees of $F$ and $F^{-1}$ are both equal to $d$. We define $\mathbb T_+:=T_+\otimes T_-$ and $\mathbb T_-:= T_-\otimes T_+$. The \emph{Green currents of bidegree $(2p,2p)=(k,k)$} of $F$ are $\mathbb T_+^p$ and $\mathbb T_-^p$, which satisfy
$$F^*(\mathbb T_+^p)=d^{2p}\mathbb T_+^p\quad\text{and}\quad F_*(\mathbb T_-^p)=d^{2p}\mathbb T_-^p\quad\text{as currents on}\quad\P^k\times\P^k.$$
The measure $\mu \otimes \mu= \mathbb T_+^p \wedge \mathbb T_-^p$ is invariant under $F$.
Moreover, notice that 
$$F_* (\mathbb T_+^p)=d^{-2p}\mathbb T_+^p \quad \text{as currents on}\quad\C^k\times \C^k$$ 
since $F$ is an automorphism of $\C^k\times \C^k$.
Let $\mathbb K_+$ and $\mathbb K_-$ be the filled Julia sets of $F$ and $F^{-1}$, which are equal to $K_+\times K_-$ and $K_-\times K_+$ respectively. 

Let  $\mathbb I_\pm$ be the indeterminacy sets of $F$ and $F^{-1}$. Let also $\Delta$ be the diagonal of $\C^k\times\C^k$ and let $\overline\Delta$ be its closure in $\P^{2k}$. We have $\mathbb I_\pm \cap \overline\Delta=\varnothing$.

Until the end of the section, we fix $r$ sufficiently large in order to have \eqref{iterate-julia}, and also so that there exist open sets $\mathbb U_j,\mathbb V_j\subseteq\P^{2k}$, $j=1,2$, with $\mathbb U_2\cap \mathbb V_2\subset\B_{2k}(0,r/2)$,
\[\overline{\mathbb K}_+\Subset \mathbb U_1\Subset\mathbb U_2, \qquad  \overline{ \mathbb K}_-\Subset \mathbb V_1\Subset\mathbb V_2, \qquad 
F^{-1}(\mathbb U_i)\Subset \mathbb U_i,  \qquad  F(\mathbb V_i)\Subset \mathbb V_i,\]
and
\begin{equation} \label{iterated-julia-2}
    \mathbb K_- \cap \B_{2k}(0,r-1) \subset F(\B_{2k}(0,r-1)),\quad\mathbb K_+\cap\B_{2k}(0,r-1) \subset F^{-1}(\B_{2k}(0,r-1)).
\end{equation}
Since $\mathbb I_+ \cap \overline \Delta=\varnothing$, we can also require $\mathbb V_1\supseteq\overline{\mathbb K}_-\cup\overline\Delta$.

\medskip

We set $\widetilde g_0:=g_0$ and $\widetilde{g}_j:=g_j\circ f^{n_j-n_1}$ for $j\geq 1$. Define
$$G_0(z,w):=g_0(w)\quad\text{and}\quad G_j(z,w)=g_j(z) \,\text{ for }j\ge 1.$$
Notice that $\|G_j\|_\infty=\|g_j\|_\infty$ for every $j$. Since the $g_j$'s are p.s.h.\ on $\B_k(0,r)$, we have that $G_0$ is p.s.h.\ on $\C^k\times\B_k(0,r) \supseteq \B_{2k}(0,r)$, and the $G_j$'s are p.s.h.\ on $\B_k(0,r)\times\C^k\supseteq \B_{2k}(0,r)$ for $j\ge1$.

Recall that $\supp(\mu)$ is contained in $K$, which is invariant under $f$. This gives $\supp(\mu)\times\supp(\mu)\subseteq K\times K=\mathbb K_+\cap\mathbb K_-$, which is invariant under $F$. Since we will always integrate the $G_j$'s against finite measures with support contained in $\supp(\mu)\times\supp(\mu)$, we can apply Lemma \ref{lem-regular} (with $2k$ instead of $k$) to the $G_j$'s with $\ell_0$ sufficiently large so that
$$1/\ell_0 \ll \big(d\cdot(\norm{F}_{\Cc^1(K\times K)}+1)\big)^{ -n_\kappa}$$
and obtain a regularization $H^{(\ell_0)}_j$ for every $G_j$. Therefore, up to replacing the $G_j$'s with the $H^{(\ell_0)}_j$'s, from now on we can assume that the functions $G_j$'s are \textbf{smooth, with compact support in} $\B_{2k}(0,r+2)$, \textbf{p.s.h.} on $\B_{2k}(0,r-1)$, and satisfy
\begin{equation} \label{G-estimates}
|G_j|\leq 1 \qquad\text{and}\qquad\norm{G_j}_{\Cc^2(\overline{\B}_{2k}(0,r-2)^c)} \leq 1.
\end{equation}

Set $\widetilde G_0:=G_0$ and $\widetilde G_j:=G_j\circ F^{n_j-n_1}$ for $j\geq 1$. Define the auxiliary functions $\Phi^\pm$ on $\C^k\times \C^k$ by:
\begin{equation}\label{phipm}
\Phi^\pm:=\Phi^\pm_{n_0,\dots,n_\kappa}=\sum_{j=0}^\kappa \Big((\kappa+1)\widetilde{G}_j+\frac{\kappa}{2}\widetilde{G}^2_j\Big)\pm \prod_{j=0}^\kappa \widetilde{G}_j.
\end{equation}
Notice that these two functions have compact support in $\C^k\times\C^k$.
We have the two following lemmas about the functions $\Phi^\pm$. The delicate point of these estimates is the independence of the $n_j$'s in Lemma \ref{lem-star-bound}.

\begin{lemma} \label{ddc-positive}
   We have $\ddc \Phi^\pm \geq 0$ on a neighborhood of $\mathbb K_+ \cap \B_{2k}(0,r-1)$, possibly depending on $n_1,\dots,n_\kappa$, but not on $g_0,\dots,g_\kappa$.
\end{lemma}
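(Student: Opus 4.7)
The plan is to first reduce to a computation on a neighborhood $W$ of $\mathbb{K}_+\cap\B_{2k}(0,r-1)$ on which every $\widetilde G_j$ is simultaneously smooth and p.s.h., and then to expand $\ddc\Phi^\pm$ and read off non-negativity using the uniform bound $|G_j|\le1$ together with the specific coefficients $(\kappa+1)$ and $\kappa/2$ in \eqref{phipm}.

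\medskip

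First I would fix the neighborhood. Because $F$ preserves $\mathbb{K}_+$ and \eqref{iterated-julia-2} gives $\mathbb{K}_+\cap\B_{2k}(0,r-1)\subseteq F^{-1}(\B_{2k}(0,r-1))$, the set $\mathbb{K}_+\cap\B_{2k}(0,r-1)$ is sent into itself by every forward iterate of $F$. Continuity of the \emph{finitely many} iterates $F^{n_j-n_1}$, $j=1,\dots,\kappa$, then yields an open neighborhood $W\subseteq\B_{2k}(0,r-1)$ of $\mathbb{K}_+\cap\B_{2k}(0,r-1)$ --- depending on $n_1,\dots,n_\kappa$ through the continuity modulus, but not on the $g_j$'s --- such that $F^{n_j-n_1}(W)\subseteq\B_{2k}(0,r-1)$ for every $j$. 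Since each $G_j$ is p.s.h.\ on $\B_{2k}(0,r-1)$, every $\widetilde G_j$ is smooth and p.s.h.\ on $W$.

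\medskip

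Next, I would compute $\ddc\Phi^\pm$ on $W$ by Leibniz. Writing $u_j:=\widetilde G_j$, the $\ddc u_j$-part of $\ddc\Phi^\pm$ has coefficient $(\kappa+1)+\kappa u_j\pm\prod_{i\ne j}u_i$, which is non-negative thanks to $|u_j|\le1$ and $|\prod_{i\ne j}u_i|\le1$; since $\ddc u_j\ge0$ on $W$, this piece is a non-negative $(1,1)$-form. The remaining contributions are of two types: a diagonal $\kappa\sum_j du_j\wedge d^cu_j$ coming from $\ddc(\tfrac{\kappa}{2}u_j^2)=\kappa u_j\ddc u_j+\kappa\,du_j\wedge d^cu_j$, and mixed terms $\pm\sum_{\ell<j}\bigl(\prod_{i\ne\ell,j}u_i\bigr)\bigl(du_\ell\wedge d^cu_j+du_j\wedge d^cu_\ell\bigr)$ from $\ddc\prod_j u_j$.

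\medskip

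The decisive step, and the one I expect to require the most care, is to show that the mixed terms are absorbed by the diagonal one. The key identity is the elementary
$$\pm c\bigl(du_\ell\wedge d^cu_j+du_j\wedge d^cu_\ell\bigr)\le du_\ell\wedge d^cu_\ell+c^2\,du_j\wedge d^cu_j\qquad(c\in\R),$$
obtained by expanding $d(u_\ell\pm cu_j)\wedge d^c(u_\ell\pm cu_j)\ge0$. Applied with $c=\prod_{i\ne\ell,j}u_i$ (so $|c|\le1$), each of the $\binom{\kappa+1}{2}$ mixed pairs is dominated by $du_\ell\wedge d^cu_\ell+du_j\wedge d^cu_j$; since every index $m\in\{0,\dots,\kappa\}$ appears in exactly $\kappa$ unordered pairs, the total is dominated by $\kappa\sum_m du_m\wedge d^cu_m$, which is exactly the diagonal contribution. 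Combining this with the non-negativity of the $\ddc u_j$-part yields $\ddc\Phi^\pm\ge0$ on $W$, as required. This final cancellation is precisely what dictates the choice of the coefficients $(\kappa+1)$ and $\kappa/2$ in \eqref{phipm}.
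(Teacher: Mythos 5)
Your proposal is correct and follows essentially the same route as the paper: restrict to a neighborhood of $\mathbb K_+\cap\B_{2k}(0,r-1)$ where all the $\widetilde G_j$ are p.s.h.\ (via \eqref{iterated-julia-2} and continuity of the finitely many iterates), note that the coefficients of the $\ddc\widetilde G_j$ terms are non-negative thanks to $|G_j|\le1$, and absorb the mixed gradient terms into the diagonal $\kappa\sum_j d\widetilde G_j\wedge\dc\widetilde G_j$ by a Cauchy--Schwarz-type inequality. Your variant of the key inequality (with $c^2$ in place of the paper's $|c|$) and the pair-counting argument are equivalent to the paper's estimate $\kappa-\sum_{s\ne j}\prod_{t\ne j,s}|\widetilde G_t|\ge0$.
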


\begin{proof}
    Remember that the inequality $i\del(g\pm h)\wedge\dbar(g\pm h)\ge0$, which is valid for every $\Cc^2$ functions $g$ and $h$, implies
    \begin{equation} \label{rearrangement}
    \pm(i\del g\wedge\dbar h+i\del h\wedge\dbar g)\ge-(i\del g\wedge\dbar g+i\del h\wedge\dbar h).
    \end{equation}
    It follows that we have
    \begin{align}
        i\ddbar\Phi^\pm=&\sum_{j=0}^\kappa i\ddbar\widetilde{G}_j\Big(\kappa+1+\kappa\widetilde{G}_j\pm\prod_{s \not=j}\widetilde{G}_s\Big)+\kappa\sum_{j=0}^\kappa i\del\widetilde{G}_j\wedge\dbar\widetilde{G}_j
        \pm \sum_{j\not=s}\Big(i\del \widetilde{G}_j\wedge\dbar \widetilde{G}_s\prod_{t\neq j,s}\widetilde{G}_t\Big)  \nonumber\\
        & \ge \sum_{j=0}^\kappa i\ddbar\widetilde{G}_j\Big(\kappa+1+\kappa\widetilde{G}_j\pm\prod_{s\not=j}\widetilde{G}_s\Big)+\sum_{j=0}^\kappa i\del \widetilde{G}_j\wedge\dbar \widetilde{G}_j\bigg(\kappa-\sum_{s\not=j}  \Big(\prod_{t\neq j,s}|\widetilde{G}_t| \Big)\bigg)\nonumber\\
        &\ge \sum_{j=0}^\kappa i\ddbar\widetilde{G}_j\Big(\kappa+1+\kappa\widetilde{G}_j\pm\prod_{s\not=j}\widetilde{G}_s\Big), \label{ddcphipm}
    \end{align}
    where in the first inequality we have used \eqref{rearrangement}, and in the second one we have used the estimates \eqref{G-estimates}.
    
    It follows from \eqref{iterated-julia-2} that $\mathbb K_+\cap\B_{2k}(0,r-1) \subset F^{-n}(\B_{2k}(0,r-1))$ for every $n\in\mathbb N$. So, for every $1\le j\le\kappa$ the function $\widetilde{G}_j$ is p.s.h.\ on a neighborhood $U$ (depending on the $n_j$'s) of $\mathbb K_+\cap \B_{2k}(0,r-1)$. Therefore, we have
    \begin{equation} \label{lower-bound-Phi}
        i\ddbar\Phi^\pm \geq \sum_{j=0}^\kappa i\ddbar\widetilde{G}_j\Big(\kappa+1+\kappa\widetilde{G}_j\pm\prod_{s\not=j}\widetilde{G}_s\Big) \ge 0\quad\text{on}\quad U,
    \end{equation}
    where the first inequality is given by \eqref{ddcphipm}, and the last one follows from the estimates \eqref{G-estimates}. The assertion follows.
\end{proof}

\begin{lemma} \label{lem-star-bound}
We have $\norm{\ddc\Phi^\pm\wedge\mathbb T_+^p}_{*,\mathbb U_1} \leq c \kappa^2$ for some $c>0$ independent of $n_1,\dots, n_\kappa$ and $g_0,\dots,g_\kappa$.
\end{lemma}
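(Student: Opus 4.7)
The plan is to leverage the form inequality obtained at the end of the proof of Lemma \ref{ddc-positive}. By \eqref{ddcphipm}, as smooth $(1,1)$-forms on $\C^{2k}$,
\[
\ddc\Phi^\pm\geq\sum_{j=0}^\kappa\alpha_j^\pm\,\ddc\widetilde G_j,\qquad\alpha_j^\pm:=\kappa+1+\kappa\widetilde G_j\pm\prod_{s\neq j}\widetilde G_s\in[0,2\kappa+2],
\]
where the range of $\alpha_j^\pm$ uses $|\widetilde G_j|\leq1$. Wedging with the positive closed current $\mathbb T_+^p$ preserves this, so $\ddc\Phi^\pm\wedge\mathbb T_+^p\geq\sum_{j}\alpha_j^\pm\,\ddc\widetilde G_j\wedge\mathbb T_+^p$. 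Since $\Phi^\pm$ is smooth with compact support in $\C^{2k}$, the current $\ddc\Phi^\pm\wedge\mathbb T_+^p=\ddc(\Phi^\pm\mathbb T_+^p)$ is $\ddc$-exact and supported in $\supp\mathbb T_+^p\subset\overline{\mathbb K}_+\subset\mathbb U_1$. By Lemma \ref{lem-oneside}, it will suffice to construct a positive closed current $R$ on $\overline{\mathbb U}_1$ with $\ddc\Phi^\pm\wedge\mathbb T_+^p\geq -R$ and $\|R\|\lesssim\kappa^2$.

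The main step will be to show, for each $j\in\{0,\dots,\kappa\}$, the existence of a positive closed $(k+1,k+1)$-current $\Sigma_j$ in $\overline{\mathbb U}_1$, of mass bounded by a constant independent of $n_1,\dots,n_\kappa$, with $\ddc\widetilde G_j\wedge\mathbb T_+^p\geq-\Sigma_j$. From Lemma \ref{lem-regular} and \eqref{G-estimates}, $G_j$ is p.s.h.\ on $\B_{2k}(0,r-1)$, smooth with bounded $\Cc^2$-norm on $\overline{\B}_{2k}(0,r-2)^c$, and compactly supported in $\B_{2k}(0,r+2)$, so $\ddc G_j\geq-c_0\,\omega_{\FS}$ globally on $\P^{2k}$ for a universal $c_0>0$. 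The choice $\Sigma_0:=c_0\,\omega_{\FS}\wedge\mathbb T_+^p$ handles $j=0$ directly. For $j\geq1$, setting $n:=n_j-n_1\geq0$ and using $\widetilde G_j=G_j\circ F^n$ together with the invariance $(F^n)^*\mathbb T_+^p=d^{2pn}\mathbb T_+^p$ on $\C^{2k}$,
\[
\ddc\widetilde G_j\wedge\mathbb T_+^p=(F^n)^*\ddc G_j\wedge\mathbb T_+^p\geq-c_0(F^n)^*\omega_{\FS}\wedge\mathbb T_+^p=-c_0\,d^{-2pn}(F^n)^*(\omega_{\FS}\wedge\mathbb T_+^p)=:-\Sigma_j,
\]
a positive closed current supported in $F^{-n}(\supp\mathbb T_+^p)=\supp\mathbb T_+^p\subset\overline{\mathbb U}_1$ by the $F$-invariance of $\supp\mathbb T_+^p$.

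The hard part will be the uniform bound $\|\Sigma_j\|\leq C$, equivalently $\|(F^n)^*(\omega_{\FS}\wedge\mathbb T_+^p)\|\lesssim d^{(k-1)n}$, which via $k=2p$ yields $\|\Sigma_j\|\lesssim d^{-n}\leq1$. This rests on the fact that the dynamical degree of the Hénon-Sibony map $F$ at bidegree $(k+1,k+1)$ equals $d^{k-1}$, combined with the $F$-invariance of $\supp\mathbb T_+^p$ and its disjointness from $\mathbb I_\pm$, ensuring the pullback of $\omega_{\FS}\wedge\mathbb T_+^p$ is well-defined with the expected mass growth. Granting this bound, the proof finishes cleanly: since $0\leq\alpha_j^\pm\leq2\kappa+2$ are smooth and $\Sigma_j\geq0$ are positive currents, $\alpha_j^\pm\Sigma_j\leq(2\kappa+2)\Sigma_j$, hence $\alpha_j^\pm\ddc\widetilde G_j\wedge\mathbb T_+^p\geq-(2\kappa+2)\Sigma_j$. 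Summing over $j$ gives $R:=(2\kappa+2)\sum_{j=0}^\kappa\Sigma_j$, positive closed in $\overline{\mathbb U}_1$ with $\|R\|\leq(2\kappa+2)(\kappa+1)C\lesssim\kappa^2$, and Lemma \ref{lem-oneside} yields $\|\ddc\Phi^\pm\wedge\mathbb T_+^p\|_{*,\mathbb U_1}\leq2\|R\|\lesssim\kappa^2$.
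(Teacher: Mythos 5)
Your proposal is correct and follows essentially the same route as the paper: lower-bound $\ddc\Phi^\pm$ by $\sum_j\tau_j\,\ddc\widetilde G_j$ with $0\le\tau_j\le2\kappa+2$, reduce to bounding the mass of $d^{-2pn}(F^n)^*(\omega_\FS\wedge\mathbb T_+^p)$ uniformly in $n$, and conclude via Lemma \ref{lem-oneside}. The only step you defer to general dynamical-degree theory is the bound $\|(F^n)^*(\omega_\FS\wedge\mathbb T_+^p)\|=d^{(2p-1)n}$, which the paper establishes directly by the duality $\lp(F^n)^*(\omega_\FS\wedge\mathbb T_+^p),\omega_\FS^{2p-1}\rp=\lp\omega_\FS\wedge\mathbb T_+^p,(F^n)_*\omega_\FS^{2p-1}\rp$ and the cohomological invariance of the mass.
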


\begin{proof}
From  \eqref{ddcphipm}, we see that we have
$$\ddc \Phi^\pm \geq \sum_{j=0}^\kappa \tau_j \ddc \widetilde G_j,$$
for some functions $\tau_j$'s on $\C^k \times \C^k$ satisfying $0\leq \tau_j\leq 2\kappa+2$.
For every $0\leq j\leq \kappa$, the assumptions \eqref{G-estimates} on the $G_j$'s give that $\ddc G_j\geq -c \omega_\FS$ on $\P^{2k}$ for some constant $c>0$ independent of $j$. It follows that
$$\ddc \widetilde G_j \geq -c' (F^{n_j-n_1})^* \omega_\FS$$
for some constant $c'>0$ independent of $n_1, \dots, n_\kappa$ and $g_0,\dots,g_\kappa$.
 Thus, we deduce that 
\begin{equation} \label{*estimate1}
\ddc\Phi^\pm \wedge \mathbb T_+^p \gtrsim - \kappa\Big(\omega_\FS+\sum_{j=1}^\kappa(F^{n_j-n_1})^*\omega_\FS \Big)\wedge \mathbb T_+^p,
\end{equation}
 We will show that the mass of $(F^{n_j-n_1})^*\omega_\FS \wedge \mathbb T_+^p$ is bounded independently of $n_1,\dots, n_\kappa$. Using that $F^*(\mathbb T_+^p)=d^{2p}\mathbb T_+^p$, we have 
\begin{equation} \label{*estimate2}
(F^{n_j-n_1})^*\omega_\FS \wedge \mathbb T_+^p  = d^{-2p(n_j-n_1)}(F^{n_j-n_1})^*( \omega_\FS \wedge \mathbb T_+^p).
\end{equation}

By definition of mass, we have
\begin{align*}
\big\| (F^{n_j-n_1})^*( \omega_\FS \wedge \mathbb T_+^p) \big\|
&=\big\lp (F^{n_j-n_1})^*( \omega_\FS \wedge \mathbb T_+^p)  \,,\, \omega_\FS^{2p-1} \big\rp \\
&= \big\lp   \omega_\FS \wedge \mathbb T_+^p  \,,\,  (F^{n_j-n_1})_*  \omega_\FS^{2p-1} \big\rp.
\end{align*}
As the mass of a positive closed current on $\P^{2k}$ only depends on its cohomology class, we conclude that
\begin{equation}\label{*estimate3}
\big\| (F^{n_j-n_1})^*( \omega_\FS \wedge \mathbb T_+^p) \big\|=\big\| (F^{n_j-n_1})_*  \omega_\FS^{2p-1}  \big\| =d^{(2p-1)(n_j-n_1)}
\end{equation}
since both $\omega_\FS$ and $\mathbb T_+$ have mass $1$. We finish the proof by using \eqref{*estimate1} to apply Lemma \ref{lem-oneside} with the exact current $\ddc\Phi^\pm\wedge\mathbb T^p_+$ instead of $\Omega$, and combining \eqref{*estimate2} and \eqref{*estimate3}.
\end{proof}

We now begin the proof of Proposition \ref{prop-mixing-bounded}. Using the invariance of $\mu$, the desired inequality does not change if we replace $n_j$ by $n_j-1$ for $1\leq j\leq \kappa$ and $g_0$ by $g_0\circ f^{-1}$. Therefore, it is enough to  assume that $n_1$ is even. We have the following lemma.

\begin{lemma}\label{lem-Psi}
There is a constant $c_\kappa>0$, independent of $n_1,\dots, n_\kappa$ and $g_0,\dots,g_\kappa$, such that 
$$ \Big| \int \prod_{j=0}^\kappa (g_j\circ f^{n_j})\,\dd \mu -\int g_0 \,\dd \mu \int \prod_{j=1}^\kappa (g_j\circ f^{n_j-n_1})\,\dd \mu \Big|\leq c_\kappa d^{-n_1 /2}.    $$
\end{lemma}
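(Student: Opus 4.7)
The plan is to lift the problem to the $F$-dynamics on $\mathbb{P}^{2k}$ via the diagonal measure and then apply Lemma \ref{lem-key} (for the H\'enon-Sibony map $F$) to the current $[\overline{\Delta}]$. Setting $m:=n_1/2$ (even by assumption) and denoting
$$A:=\int\prod_{j=0}^\kappa g_j\circ f^{n_j}\,\dd\mu,\qquad B:=\int g_0\,\dd\mu\cdot\int\prod_{j=1}^\kappa g_j\circ f^{n_j-n_1}\,\dd\mu,$$
my goal is to show $|A-B|\lesssim_\kappa d^{-m}$. Let $\mu_\Delta$ denote the pushforward of $\mu$ under the diagonal embedding $z\mapsto(z,z)$. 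Using the $f^{-m}$-invariance of $\mu$ and the definitions of the $\widetilde{G}_j$'s, a direct computation gives
$$A=\int\Big(\prod_{j=0}^\kappa\widetilde{G}_j\Big)(f^m(z),f^{-m}(z))\,\dd\mu(z)=\Big\langle F^m_*\mu_\Delta,\,\prod_{j=0}^\kappa\widetilde{G}_j\Big\rangle\qquad\text{and}\qquad B=\Big\langle\mu\otimes\mu,\,\prod_{j=0}^\kappa\widetilde{G}_j\Big\rangle.$$

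The product $\prod_j\widetilde{G}_j$ has no definite sign of $\ddc$, which is exactly why the auxiliary functions $\Phi^\pm$ from \eqref{phipm} are introduced: they satisfy $\Phi^+-\Phi^-=2\prod_j\widetilde{G}_j$ and, by Lemma \ref{ddc-positive}, are p.s.h.\ on a neighborhood of $\mathbb{K}_+\cap\mathbb{V}_2$ (since $\mathbb{K}_+\cap\mathbb{V}_2\subseteq\mathbb{U}_2\cap\mathbb{V}_2\subset\mathbb{B}_{2k}(0,r/2)\subset\mathbb{B}_{2k}(0,r-1)$). Writing $\mu_\Delta=\mathbb{T}_+^p\wedge[\overline{\Delta}]$ (which follows from $\iota^*(T_+^p\otimes T_-^p)=T_+^p\wedge T_-^p=\mu$ under the diagonal inclusion $\iota\colon\overline{\Delta}\simeq\mathbb{P}^k\hookrightarrow\mathbb{P}^{2k}$) and using the commutation $(F^m)^*\mathbb{T}_+^p=d^{2pm}\mathbb{T}_+^p$ together with the fact that $\Phi^\pm$ has compact support in $\mathbb{C}^{2k}$ (where $F$ is an automorphism), I will rewrite
$$\langle F^m_*\mu_\Delta-\mu\otimes\mu,\,\Phi^\pm\rangle=\langle d^{-2pm}(F^m)_*[\overline{\Delta}]-\mathbb{T}_-^p,\,\Phi^\pm\mathbb{T}_+^p\rangle.$$

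Both hypotheses of the $F$-version of Lemma \ref{lem-key} are then met: $[\overline{\Delta}]$ is positive closed of mass $1$ (as $\overline{\Delta}$ is a linear $\mathbb{P}^k\subset\mathbb{P}^{2k}$) and is contained in $\mathbb{V}_1$ by our choice $\mathbb{V}_1\supseteq\overline{\Delta}$, while $\Phi^\pm$ is $\mathcal{C}^2$, compactly supported in $\mathbb{C}^{2k}$, and p.s.h.\ near $\mathbb{K}_+\cap\mathbb{V}_2$. Applying the lemma and controlling the $*$-norm by Lemma \ref{lem-star-bound} yields
$$\big|\langle d^{-2pm}(F^m)_*[\overline{\Delta}]-\mathbb{T}_-^p,\,\Phi^\pm\mathbb{T}_+^p\rangle\big|\lesssim d^{-m}\|\ddc\Phi^\pm\wedge\mathbb{T}_+^p\|_{*,\mathbb{U}_1}\lesssim\kappa^2 d^{-m},$$
and the linearity identity $A-B=\tfrac12\langle F^m_*\mu_\Delta-\mu\otimes\mu,\,\Phi^+-\Phi^-\rangle$ gives $|A-B|\lesssim_\kappa d^{-n_1/2}$. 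The main technical obstacle lies in the second step: the identification $\mu_\Delta=\mathbb{T}_+^p\wedge[\overline{\Delta}]$ (which relies on the H\"older continuity of the Green potentials through Bedford--Taylor theory) and the intertwining with $(F^m)_*$ across the wedge product must be handled with care since $F$ is only birational on $\mathbb{P}^{2k}$; the argument goes through thanks to the compact support of $\Phi^\pm$ in $\mathbb{C}^{2k}$, where $F$ is a genuine automorphism and the indeterminacy set $\mathbb{I}_+$ at infinity contributes nothing to the pairing.
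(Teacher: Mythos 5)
Your proposal is correct and follows essentially the same route as the paper: lift to the product dynamics via $[\overline\Delta]$, use $F_*(\mathbb T_+^p)=d^{-2p}\mathbb T_+^p$ on $\C^{2k}$ to rewrite both integrals as pairings with $d^{-2pm}(F^m)_*[\overline\Delta]$ and $\mathbb T_-^p$, and conclude with Lemma \ref{lem-key} and Lemma \ref{lem-star-bound}. One small imprecision: Lemma \ref{lem-key} yields only the one-sided estimate $\lp d^{-2pm}(F^m)_*[\overline\Delta]-\mathbb T_-^p,\Phi^\pm\,\mathbb T_+^p\rp\le c\,d^{-m}\norm{\ddc\Phi^\pm\wedge\mathbb T_+^p}_{*,\mathbb U_1}$, not the absolute-value bound you wrote; this is harmless here because the non-product terms of $\Phi^+$ and $\Phi^-$ pair to zero against the difference current (their integrals against $F^m_*\mu_\Delta$ and $\mu\otimes\mu$ coincide by the invariance of $\mu$), so the two upper bounds for $\Phi^+$ and $\Phi^-$ combine into the two-sided estimate for $\frac12\lp\cdot,\Phi^+-\Phi^-\rp$ — which is exactly how the paper derives its inequalities \eqref{ineq-plus} and \eqref{ineq-minus}.
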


\begin{proof}
Put $\Psi:= g_1 (g_2 \circ f^{n_2-n_1})\cdots (g_\kappa \circ f^{n_\kappa-n_1})=\prod_{j=1}^\kappa \widetilde{g}_j$. 
We are going to prove that we have
\begin{equation} \label{ineq-plus}
 \int g_0 ( \Psi\circ f^{n_1})\,\dd \mu -\int g_0 \,\dd \mu \int \Psi \,\dd \mu \leq c_\kappa d^{-n_1 /2}  \end{equation}
and 
\begin{equation} \label{ineq-minus} 
-\int g_0 ( \Psi\circ f^{n_1})\,\dd \mu +\int g_0 \,\dd \mu \int \Psi \,\dd \mu \leq c_\kappa d^{-n_1 /2},   \end{equation}
for some $c_\kappa>0$ independent of $n_1,\dots, n_\kappa$ and $g_0,\dots,g_\kappa$. This gives the desired result. We will make use of the functions $\Phi^\pm$ defined in \eqref{phipm}.

\smallskip

Recall that $\mu=T_+^p \wedge T_-^p$, and that the currents $T_\pm$ have H\"older continuous potentials on $\C^k$. It follows that the intersections of $T_\pm$ with positive closed currents on $\C^k$ are well defined.
Using the invariance of $\mu$, we have
\begin{align*}
A:&=\int g_0 (\Psi \circ f^{n_1})\,\dd\mu+\int \Big( (\kappa+1)\sum_{j=0}^\kappa g_j+\frac{\kappa}{2}\sum_{j=0}^\kappa g^2_j \Big)\,\dd\mu\\
&=\begin{aligned}[t]\bigg\lp& T_+^p\wedge T_-^p \,,\, (\kappa+1)g_0\circ f^{-n_1/2}+\frac{\kappa}{2}g_0^2\circ f^{-n_1/2}\\
&+\sum_{j=1}^\kappa \Big((\kappa+1)\widetilde{g}_j\circ f^{n_1/2}+\frac{k}{2}\widetilde{g}_j^2\circ f^{n_1/2}\Big)+(g_0\circ f^{-n_1/2})(\Psi\circ f^{n_1/2})\bigg\rp\end{aligned}\\
&=\begin{aligned}[t]\bigg\lp &(T_+^p \otimes T_-^p) \wedge [\Delta]  \,,\, (\kappa+1)g_0\big(f^{-n_1/2}(w)\big)+\frac{\kappa}{2}g_0^2\big(f^{-n_1/2}(w)\big)\\
&+\sum_{j=1}^\kappa \Big((\kappa+1)\widetilde{g}_j\big(f^{n_1/2}(z)\big)+\frac{k}{2}\widetilde{g}_j^2\big(f^{n_1/2}(z)\big)\Big)+g_0\big(f^{-n_1/2}(w)\big)\Psi\big(f^{n_1/2}(z)\big)\bigg\rp.\end{aligned}
\end{align*}

From the definitions of $\Psi$ and $\Phi^+$ and the fact that $F_*(\mathbb T_+^p)=d^{-2p}\mathbb T_+^p$  as currents on $\C^k \times \C^k$, it follows that
\begin{align*}
A&= \big\lp \mathbb T_+^p \wedge [\Delta]  \,,\, \Phi^+\circ F^{n_1/2} \big\rp  = \big\lp \mathbb T_+^p \wedge [\Delta]  \,,\, (F^{n_1/2})^* \Phi^+ \big\rp  \\
&=  \big\lp d^{-pn_1} \mathbb T_+^p \wedge (F^{n_1/2})_*[\Delta]  \, ,\, \Phi^+ \big\rp = \big\lp  d^{-pn_1}(F^{n_1/2})_*[\Delta]  \,,\,   \Phi^+  \mathbb T_+^p \big\rp.
\end{align*}

Therefore we have
\begin{equation} \label{split1}
    \int g_0 (\Psi \circ f^{n_1})\,\dd\mu+\int \Big( (\kappa+1)\sum_{j=0}^\kappa g_j+\frac{\kappa}{2}\sum_{j=0}^\kappa g^2_j \Big)\,\dd\mu = \big\lp  d^{-pn_1}(F^{n_1/2})_*[\Delta]  \,,\,   \Phi^+  \mathbb T_+^p \big\rp.
\end{equation}

\smallskip

Recall that   $\overline{\mathbb K}_-\cup\overline\Delta\subseteq \mathbb V_1\Subset\mathbb V_2$, and $F(\mathbb V_j)\Subset \mathbb V_j$ for $j=1,2$. Lemma  \ref{ddc-positive} implies that we have $\ddc \Phi^+\geq 0$ on a neighborhood of $\mathbb K_+\cap \mathbb V_2$. Thus, applying Lemma \ref{lem-key} with $F,[\Delta], \Phi^+, \mathbb K_+,\mathbb V_j, \mathbb T_\pm$ instead of $f,R, \Phi, K_+,V_j, T_\pm$, yields
\begin{equation} \label{split3}
\big\lp  d ^{-pn_1} (F^{n_1/2})_*[\Delta] - \mathbb T_-^p \,,\, \Phi^+(z,w)\mathbb T_+^p \big\rp \leq cd^{-n_1/2} \norm{\ddc \Phi^+ \wedge \mathbb T_+^p}_{*,\mathbb U_1}.
\end{equation}

Since $\mu\otimes \mu= \mathbb T_+^p \wedge \mathbb T_-^p$, and using also the invariance of $\mu$, we get
\begin{equation} \label{split4}
\big\lp  \mathbb T_-^p, \Phi^+ (z,w)\mathbb T_+^p\big\rp=   \lp   \mu\otimes \mu , \Phi^+ \rp =\int \Big( (\kappa+1)\sum_{j=0}^\kappa g_j+\frac{\kappa}{2}\sum_{j=0}^\kappa g^2_j \Big)\,\dd\mu+\lp \mu, g_0 \rp\lp \mu, \Psi  \rp.
\end{equation}

\smallskip

Combining \eqref{split1}, \eqref{split3}, and \eqref{split4}, we deduce that 
$$\big\lp \mu, g_0 (\Psi \circ f^{n_1}) \big\rp  -\lp \mu, g_0 \rp\lp \mu, \Psi  \rp \leq cd^{-n_1/2} \norm{\ddc \Phi^+ \wedge \mathbb T_+^p}_{*,\mathbb U_1}.$$
Thus,  \eqref{ineq-plus} follows from Lemma \ref{lem-star-bound}. Repeating the above argument with $\Phi^-$ instead of $\Phi^+$, we also get the inequality \eqref{ineq-minus}. This concludes the proof of the lemma.
\end{proof}

\smallskip

\begin{proof}[End of the proof of Proposition \ref{prop-mixing-bounded}]
We proceed by induction. The base case $\kappa=1$ is given by \cite[Theorem 1.1]{Wu-Ergodic}.  Suppose that the proposition holds for $\kappa-1$ observables. We need to prove that the statement holds for $\kappa$, i.e., that we have
$$\Big|\int \prod_{j=0}^\kappa (g_j\circ f^{n_j})\,\dd\mu -\prod_{j=0}^\kappa \int  g_j  \,\dd\mu\Big|\lesssim \, d^{ -\min_{0\leq j\leq \kappa-1}(n_{j+1}-n_j) /2   }.   $$
Recall that we can assume that $|g_j|\leq 1$ for every $j\ge1$. By Lemma \ref{lem-Psi}, it is enough to show that we have
$$ \Big| \int g_0 \,\dd \mu \int \prod_{j=1}^\kappa (g_j\circ f^{n_j-n_1})\,\dd \mu-\prod_{j=0}^\kappa \int  g_j  \,\dd\mu\Big|\lesssim \, d^{ -\min_{1\leq j\leq \kappa-1}(n_{j+1}-n_j) /2   }.$$
This follows from the inductive assumption. The proof is complete.
\end{proof}

\subsection{Mixing for all d.s.h.\ functions} \label{mixing-dsh}
We are ready to prove our first main theorem.

\begin{proof}[Proof of Theorem \ref{thm-mixing}]
Up to rescaling, we can assume without loss of generality that $\|\varphi_j\|_\DSH\le1$ for every $j$. Applying Lemma \ref{dsh-split-psh}, and by linearity, we may also assume that we have $\varphi_j\le0$ and $\|\varphi_j\|_\DSH \le 1$ for every $j$, and that all the $\varphi_j$'s are p.s.h.\ on $B$.

\smallskip

Using Lemma \ref{tail-split}, we can write $\varphi_j=\varphi^{(N)}_{j,1}+\varphi^{(N)}_{j,2}$, where we choose $N$ as
\begin{equation} \label{choiceofN}
    N:=(2\alpha)^{-1}\min_{0\leq j\leq \kappa-1}(n_{j+1}-n_j)\log{d}.
\end{equation}
Since $N$ is fixed,  we will omit its dependence and write $\varphi_{j,1}^{(N)}=\varphi_{j,1}$ and $\varphi_{j,2}^{(N)}=\varphi_{j,2}$.

\smallskip

Indexing all the possible choices of the $v_j$'s indexes in the $\varphi_{j,v_j}$'s with $\bv:=(v_0,v_1,\dots,v_\kappa)\in\{1,2\}^{\kappa +1}$, we have
\begin{align*}
    \Big|\int \Big(\prod_{j=0}^\kappa &\varphi_j\circ f^{n_j} \Big)\,\dd \mu -\prod_{j=0}^\kappa  \int \varphi_j \,\dd \mu \Big|\\
    &=\bigg|\sum_{\mathbf v\in\{1,2\}^{\kappa+1}}\bigg(\int \Big(\prod_{j=0}^\kappa \varphi_{j,v_j}\circ f^{n_j} \Big) \,\dd \mu -\prod_{j=0}^\kappa  \int \varphi_{j,v_j} \,\dd \mu \bigg)\bigg|\\
    &\leq\sum_{\mathbf v\in\{1,2\}^{\kappa+1}}\Big|\int \Big(\prod_{j=0}^\kappa \varphi_{j,v_j}\circ f^{n_j} \Big)\,\dd \mu -\prod_{j=0}^\kappa  \int \varphi_{j,v_j} \,\dd \mu\Big|\\
    &\le\begin{aligned}[t]&\Big|\int \Big(\prod_{j=0}^\kappa \varphi_{j,1}\circ f^{n_j} \Big)\,\dd \mu -\prod_{j=0}^\kappa  \int \varphi_{j,1} \,\dd \mu\Big|\\
    & \quad+\sum_{ \bv\not=(1,\dots,1)}\bigg(\Big|\int \Big(\prod_{j=0}^\kappa \varphi_{j,v_j}\circ f^{n_j} \Big)\,\dd \mu\Big|+\Big|\prod_{j=0}^\kappa  \int \varphi_{j,v_j} \,\dd \mu\Big|\bigg).\end{aligned}
\end{align*}
To estimate the last term, we treat two cases separately.

\smallskip
\textbf{Case} $\bv=(1,\dots,1)$. Since all the $\varphi_{j,1}$'s are p.s.h.\ on $B$ and $\|\varphi_{j,1}\|_\infty \le N$ for every $j$, we can apply Proposition \ref{prop-mixing-bounded} to get
\begin{align*}
\Big|\int &\varphi_{0,1}  ( \varphi_{1,1}\circ f^{n_1}  ) \cdots (\varphi_{\kappa,1} \circ f^{n_\kappa}  )\,\dd\mu -\prod_{j=0}^\kappa \int  \varphi_{j,1}  \,\dd\mu\Big|\\
&\leq  C_\kappa \, d^{ -\min_{0\leq j\leq \kappa-1}(n_{j+1}-n_j)/2}\prod_{j=0}^\kappa \|\varphi_{j,1}\|_\infty\leq C_\kappa \, d^{ -\min_{0\leq j\leq \kappa-1}(n_{j+1}-n_j)/2} N^{\kappa+1}.
\end{align*}

\textbf{Case} $\bv\not=(1,\dots,1)$. Using the estimates on the $\varphi_{j,1}$'s and $\varphi_{j,2}$'s given by Lemma \ref{tail-split}, and by Hölder's inequality, each of these terms is bounded by $N^{\kappa}e^{-\alpha N}$, up to a multiplicative constant depending only on $\kappa$.

\smallskip

Combining the estimates above, and inserting the value of $N$ as in \eqref{choiceofN}, we obtain a constant $C_\kappa>0$ independent of $n_1,\dots,n_\kappa$ such that 
\begin{align*}
\Big| \int \varphi_0 (\varphi_1\circ f^{n_1})\cdots (  \varphi_\kappa \circ f^{n_\kappa}) \,\dd \mu -&\prod_{j=0}^\kappa  \int \varphi_j \,\dd \mu \Big| \\
&\leq C_\kappa \min_{0\leq j\leq \kappa-1}(n_{j+1}-n_j)^{\kappa+1} d^{ -\min_{0\leq j\leq \kappa-1}(n_{j+1}-n_j)/2 }. 
\end{align*}
Thus, the estimate in Theorem \ref{thm-mixing} holds for any $\theta>1/\sqrt d$ (up to possibly increasing the value of $C_\kappa$), as desired.
\end{proof}

\medskip
\section{Central limit theorem for d.s.h.\ observables} \label{sec-clt}

This section is devoted to prove Theorem \ref{thm-clt}. In \cite{bian-dinh-sigma}, the authors apply \cite[Theorem 1.1]{bjo-gor-clt}, with all the norms $N_s$'s appearing there equal to $\norm{\cdot}_{\Cc^\alpha}, 0<\alpha\le2$. As the same result cannot be applied in our case, we will instead prove the following variation of \cite[Theorem 1.1]{bjo-gor-clt}, from which we will deduce our desired result.

\begin{theorem} \label{general-clt}
Fix a measure space $(X,\mu)$ and take a map $f:X\rightarrow X$ such that $\mu$ is $f$-invariant. Let $(\cA, \norm{\cdot}_\cA)$ be a space of test functions on $X$. Assume that $f^*$ is bounded with respect to $\norm{\cdot}_\cA$. Assume that we also have:
\begin{enumerate}[label=(\roman*)]
    \item \textbf{mixing}: the measure $\mu$ is exponentially mixing of all orders for observables in $\cA$;
    \item \textbf{controlled tail}: there exists a set of functions $\cB$ such that $(X,\mu,f,\cA,\cB)$ satisfies the \emph{controlled tail property}.
\end{enumerate}
Then all observables in $\cA$ satisfy the central limit theorem (in the sense of \eqref{law-conv}) with respect to $\mu$.
\end{theorem}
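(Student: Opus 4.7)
The plan is to establish the central limit theorem via the classical \emph{method of moments}. After translating to assume $\lp\mu,\varphi\rp=0$, it suffices to show that for every $m\in\N$,
\[
\frac{1}{n^{m/2}}\int S_n(\varphi)^m\,\dd\mu\longrightarrow
\begin{cases} 0 & \text{if } m \text{ is odd},\\ \dfrac{(2\ell)!}{2^\ell\,\ell!}\,\sigma^{2\ell} & \text{if } m=2\ell, \end{cases}
\]
which are the moments of $\oN(0,\sigma^2)$. Since a centered Gaussian is uniquely determined by its moments, this convergence will yield the desired convergence in law.

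To compute the $m$-th moment, I would expand $S_n(\varphi)^m$ as a sum over ordered tuples $0\le i_1\le\cdots\le i_m\le n-1$ (with the multinomial symmetrization factor) and group them by their \emph{gap pattern}: fixing a threshold $n_0$ that will eventually be chosen as a small power of $\log n$, I would declare two consecutive indices to lie in the same \emph{cluster} when $i_{s+1}-i_s<n_0$, and in different clusters otherwise. For each clustering the integrand is a product of cluster factors, with consecutive clusters separated by gaps of length at least $n_0$. The exponential mixing of order (number of clusters) supplied by hypothesis~(i) then decouples the integral into the product of the cluster integrals, up to an error of order $\theta^{n_0}$.

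The main obstacle is that each cluster factor is itself a product of several $\varphi\circ f^{i_s}$, which in general does not lie in $\cA$; this is precisely the issue preventing a direct application of \cite[Theorem~1.1]{bjo-gor-clt}. The controlled tail property~(ii) is tailored to this difficulty: I expect it to supply, for each $\varphi\in\cA$ and each parameter $N$, a decomposition $\varphi=\varphi_1^{(N)}+\varphi_2^{(N)}$ where $\varphi_1^{(N)}$ is bounded by $\sim N$ and lies in an auxiliary space $\cB$ on which products can still be handled by mixing, while $\varphi_2^{(N)}$ has $L^q(\mu)$-norm exponentially small in $N$ for every finite $q$ (cf.\ Lemma~\ref{tail-split} in the d.s.h.\ setting). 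Substituting this split into each cluster and expanding multilinearly, the pure-$\varphi_1^{(N)}$ contribution is handled by mixing inside $\cB$, and every term containing at least one $\varphi_2^{(N)}$ factor is controlled by H\"older's inequality, using the boundedness of $f^*$ on $\cA$ to bound the remaining factors. Tuning $N$ as a slowly growing function of $n$ makes both the mixing error and the tail error negligible.

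A final combinatorial bookkeeping extracts the Gaussian moments: clusters of size $1$ contribute $\lp\mu,\varphi\rp=0$ and so kill their tuples; clusters of size $\ge 3$ consume too many indices to produce a factor of $n^{m/2}$ and are subleading; hence only perfect pairings of $\{1,\ldots,m\}$ survive, forcing $m=2\ell$. Summing $\int\varphi\cdot(\varphi\circ f^k)\,\dd\mu$ over the admissible gap $k$ within each pair contributes, by the definition of $\sigma^2$ and the two-observable mixing estimate, a factor of $n\sigma^2+o(n)$, and the standard count $(2\ell)!/(2^\ell\,\ell!)$ of pairings produces the Gaussian moment. The main technical difficulty is concentrated in the deployment of~(ii) to tame the products inside each cluster; the rest is the standard moment-method combinatorics refined by the quantitative mixing from~(i).
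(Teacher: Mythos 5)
Your overall strategy is the right one: the paper itself proves Theorem \ref{general-clt} by running the Bj\"orklund--Gorodnik argument (organized via cumulants rather than raw moments, but that is a cosmetic difference) and replacing the two places where the Sobolev-embedding and almost-multiplicativity hypotheses are used by new lemmas based on the controlled tail property. However, as written your proposal has two concrete gaps in the decoupling step, which is where all the difficulty lives.

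First, your single-threshold clustering does not control the decoupling error. The controlled tail decomposition must be applied to the shifted observables $\varphi\circ f^{n}$ inside a cluster, and since $f^*$ is only assumed \emph{bounded} on $\cA$, one has $\|\varphi\circ f^{n}\|_\cA\le C_f^{n}\|\varphi\|_\cA$ with $C_f>1$ in general (for d.s.h.\ observables and H\'enon maps, $C_f$ is comparable to the degree $d$). Consequently the decoupling estimate (Lemma \ref{alternate-lemma} in the paper) is of the form $C_t\,\eta_t^{b}\,C_f^{ta}$, where $a$ bounds the cluster diameters and $b$ bounds the inter-cluster gaps from below. With your clustering rule, a cluster can have diameter up to $(m-1)n_0$ while the gaps are only guaranteed to be $\ge n_0$, so the error is of order $\eta_t^{n_0}C_f^{tm n_0}$, which need not tend to $0$ (and does not in the d.s.h.\ application, where $\eta_t>d^{-1/2}$ while $C_f\sim d$). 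The fix is the multi-scale separation of Bj\"orklund--Gorodnik: one works with a geometrically growing hierarchy of scales and shows that every configuration of indices either lies in a single small cluster or splits, at some scale, into clusters whose diameters are dominated by a small multiple of the separating gap, so that $\eta_t^{b}C_f^{ta}$ is genuinely small. Your error term ``of order $\theta^{n_0}$'' silently assumes $C_f=1$.

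Second, you do not explain how the mixing hypothesis is actually applied to a cluster factor. Hypothesis (B2) gives mixing for observables \emph{in} $\cB$, whereas a cluster factor is a \emph{product} $\prod_{j\in J}\varphi^{(N)}_{1}\circ f^{n_j-n_J}$ of elements of $\cB$, which in general does not belong to $\cB$ (a product of p.s.h.\ functions is not p.s.h.). The mechanism that makes the decoupling possible is condition (B3): the product $x_1\cdots x_t$ is rewritten as $\sum_l P_{t,l}(x_1,\dots,x_t)$ where each $\pm P_{t,l}(g_1,\dots,g_t)$ lies in $\cB$ with norm controlled by $\tilde c_t$ (this is the abstract version of the $\Phi^\pm$ construction of Section \ref{sec-mixing}). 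Only after this substitution can the order-$|\Qc|$ mixing in $\cB$ be invoked, one summand at a time. Saying that products ``can still be handled by mixing'' inside $\cB$ elides exactly the point that the whole construction of $\cB$ and (B3) is designed to address; without it the decoupling step does not go through.
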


Let us now explain the notion (ii) in the above theorem.
Let $X,\mu,f,\cA$ be as in Theorem \ref{general-clt}. From now on, every dependence on $X$ or $\mu$ will be implicit. Let also $\cB$ be a cone inside a normed vector space of real functions (with abuse of notation, we refer to the norm as $\norm{\cdot}_\cB$), such that:
\begin{enumerate}[label=(B\arabic*)]
    \item for every $\varphi\in\oB$, we have $\|\varphi\|_\infty\le\|\varphi\|_\oB$;
    \item the exponential mixing of all orders, as in Definition \ref{def-mix}, holds for all observables in $\cB$ with $\norm{\cdot}_\cB$ instead of $\norm{\cdot}_\oE$;
    \item for every integer $t\ge2$ there exist functions $P_{t,1},\dots,P_{t,m_t}:\mathbb R^t\rightarrow \mathbb R$ such that:
    \begin{itemize}
    \item for all $x_1,\dots,x_t\in\mathbb{R}$ we have
    $$\sum_{l=1}^{m_t} P_{t,l}(x_1,\dots,x_t)=x_1 x_2\cdots x_t;$$
    \item for every $g_1,\dots,g_t\in\cB$ with $\|g_j\|_\cB\le1$, we have $$P_{t,l}(g_1,\dots,g_t)\in\cB\quad\text{and}\quad\|P_{t,l}(g_1,\dots,g_t)\|_\cB\le\tilde{c}_t,$$
    $$\text{or}\quad-P_{t,l}(g_1,\dots,g_t)\in\cB\quad\text{and}\quad\|-P_{t,l}(g_1,\dots,g_t)\|_\cB\le\tilde{c}_t.$$
    \end{itemize}
    Here, $\tilde{c}_t$, $m_t$ and the $P_{t,l}$'s depend only on $t$.
\end{enumerate}
Without loss of generality, we will assume that the $\tilde{c}_t$'s and the $m_t$'s in (B3) are non-decreasing in $t$.

\smallskip

\begin{definition} \label{ctp} \rm
We say that $(X,\mu,f,\cA,\cB)$ as above satisfies the \emph{controlled tail property} if, for every $\varphi\in\cA$ and every $M\ge0$, we can write $\varphi=\varphi_1^+-\varphi_1^-+\varphi_2$ with:
\begin{enumerate}
    \item $\varphi_1^\pm\in\cB$ and $\|\varphi_1^\pm\|_\cB\le cM\|\varphi\|_\cA$ for some $c\ge 1$ independent of $\varphi$;
    \item $\|\varphi_2\|_{L^q(\mu)}\le c_q e^{-\alpha_qM}\|\varphi\|_\cA$ for every $q\in\mathbb{N}^*$, where $\alpha_q>0$ and $c_q>0$ depend only on $q$.
\end{enumerate}
\end{definition}
We will assume without loss of generality that the $\alpha_q$'s and the $c_q$'s are non-increasing and non-decreasing in $q$, respectively.

\medskip

We first use Theorem \ref{general-clt} to complete the proof of Theorem \ref{thm-clt}.

\begin{proof}[Proof of Theorem \ref{thm-clt}]
We just need to prove that $\oA:=\DSH(\mathbb{P}^k)$, equipped with the norm $\norm{\cdot}_\oA=\norm{\cdot}_\text{DSH}$, satisfies the hypotheses of Theorem \ref{general-clt}. It is not difficult to check that $f^*$ is bounded with respect to the norm $\norm{\cdot}_\DSH$.

\smallskip

Condition (i) is given by Theorem \ref{thm-mixing}.
We now verify condition (ii). To do that, we need a set of functions $\oB$ and $\norm{\cdot}_\oB$ satisfying conditions (B1-B3), and such that $(X,\mu,f,\cA,\cB)$ satisfies the controlled tail property. 
    
We take as $\oB$ the set of bounded real functions on $\P^k$ which are p.s.h.\ on $B$, with $\norm{\cdot}_\oB:=\norm{\cdot}_\infty$. Condition (B1) is immediate. Proposition \ref{prop-mixing-bounded} gives condition (B2). Condition (B3) can be obtained with $m_t=2$ for every $t$ by using the following two functions:
$$P_{t,1}(x_1,\dots,x_t):=\frac{t}{2} \sum_{j=1}^t  x_j+\frac{t-1}{4}\sum_{j=1}^t x_j^2+\frac{1}{2}\prod_{j=1}^t x_j;$$
$$P_{t,2}(x_1,\dots,x_t):=-\frac{t}{2}  \sum_{j=1}^t  x_j-\frac{t-1}{4}\sum_{j=1}^t x_j^2+\frac{1}{2}\prod_{j=1}^t x_j.$$
Observe that $2P_{\kappa+1,1}(\widetilde{G}_0,\dots,\widetilde{G}_\kappa)=\Phi^+$ and $-2P_{\kappa+1,1}(\widetilde{G}_0,\dots,\widetilde{G}_\kappa)=\Phi^-$, where $\Phi^\pm$ are defined in \eqref{phipm}. Hence, condition (B3) follows from the same arguments as in the proof of Lemma \ref{ddc-positive}.

It remains to verify that every d.s.h.\ function $\varphi$ can be written as $\varphi=\varphi_1^+-\varphi_1^-+\varphi_2$, with $\varphi_1^+,\varphi_1^-$ and $\varphi_2$ satisfying conditions (1) and (2) in Definition \ref{ctp}. This follows from Lemmas \ref{dsh-split-psh} and \ref{tail-split}. The proof is complete.
\end{proof}

\smallskip

It remains to prove  Theorem \ref{general-clt}. Let us first recall the assumptions in \cite[Theorem 1.1]{bjo-gor-clt} (which are there stated as (2.3) to (2.6)), where the $N_s$'s are (semi-)norms on $\cA$:
\begin{itemize}
    \item \textbf{monotonicity}: $N_s(\varphi)\lesssim_s N_{s+1}(\varphi)$ for every $s\ge1$ and every observable $\varphi$;
    \item \textbf{Sobolev embedding}: $\|\varphi\|_{\infty}\lesssim_s N_s(\varphi)$ for every $s\ge1$ and every observable $\varphi$;
    \item \textbf{$\Z$-boundedness}: for every $s\ge 1$, there exists $\sigma_s>0$ such that $N_s(\varphi\circ f^n)\lesssim_s e^{\sigma_s|n|}N_s(\varphi)$ for every $n\in\mathbb{Z}$ and every observable $\varphi$;
    \item \textbf{almost multiplicative}: $N_s(\varphi_1\varphi_2)\lesssim_s N_{s+1}(\varphi_1)N_{s+1}(\varphi_2)$ for every $s\ge1$ and every observables $\varphi_1,\varphi_2$.
\end{itemize}

The monotonicity is immediate to check in our case, since we have $N_s(\varphi)=\|\varphi\|_\cA$ for every $s\in\mathbb{N}$. The $\Z$-boundedness is immediate too, because we just need to take $\sigma_s:=\log{C_f}$ for every $s\in\mathbb{N}$, where $C_f:=\max\{1,\|f^*\|_\cA\}$. We do not have the Sobolev embedding condition, as we do not only use bounded observables. We also do not have the almost multiplicative condition, neither in the general setting of observables in $\cA$, nor in the specific case of d.s.h.\ observables (recall that a product of d.s.h.\ functions is not d.s.h.\ in general).

 We note that, in \cite{bjo-gor-clt}, the Sobolev embedding and almost multiplicative conditions are used only once. The first one is used in \cite[Section 9.1, p.\ 477]{bjo-gor-clt}, for an estimate that we will prove using the controlled tail property, see Lemma \ref{sobolev} below. The second one is used in the proof of \cite[Lemma 9.1]{bjo-gor-clt}. We will show a version of that lemma for a norm satisfying the hypotheses of Theorem \ref{general-clt}, see Lemma \ref{alternate-lemma} below. The lemmas that we are going to prove are stated in a simpler setting than \cite{bjo-gor-clt}, because in our case the group acting on the space of test functions is just $\Z$ and the action is given by the iterates of $f$.

\medskip

Set $[t]:=\{1,2,\dots,t\}$. We say that $\Qc$ is a \textit{partition} of $[t]$ if the elements of $\Qc$ are disjoint subsets of $[t]$ and their union is $[t]$. Given a partition $\Qc$ of $[t]$ and $n_1,\dots,n_t\in\Z$, we denote 
$$\max \Qc:=\max \Qc(n_1,\dots,n_t)  =\max_{J\in \Qc} \big\{ |n_{j_1}-n_{j_2}|:\, j_1,j_2\in J  \big\}$$
and
$$\min \Qc:=\min \Qc(n_1,\dots,n_t)=\min_{J,L\in \Qc,J\neq L} \big\{ |n_j-n_l|:\, j\in J , l\in L \big\}.$$
These quantities are the maximum of the ``diameters'' of the sets in $\Qc$, and the minimum ``distance'' between two of these sets, respectively.

\medskip

First, we prove the estimate that in \cite[Section 9.1]{bjo-gor-clt} was proved using the Sobolev embedding condition. For a finite set $I$, the notation $|I|$ means the number of elements in $I$.

\begin{lemma} \label{sobolev}
Let $t\ge 2$ be an integer. Assume $(X,\mu,f,\cA,\cB)$ satisfies the controlled tail property as in Definition \ref{ctp}.  Then, for every partition $\Qc$ of $[t]$, every $n_1,\dots,n_t\in\Z$, every $I\subseteq [t]$, and every $\varphi\in\cA$, we have
    $$\bigg|\prod_{J\in\Qc}\Big(\int \prod_{j\in I\cap J}\varphi\circ f^{n_j}\diff\mu\Big)\bigg|\lesssim_t \|\varphi\|_\cA^{|I|}.$$
\end{lemma}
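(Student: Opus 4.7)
The plan is to reduce the statement to a single-block estimate. Since $\{I\cap J: J\in\Qc\}$ is a partition of $I$ and the number of blocks $|\Qc|\le t$, it suffices to show that for every $s\le t$ and every integers $m_1,\dots,m_s$ one has
\[
\Big|\int \prod_{i=1}^s \varphi\circ f^{m_i}\,\dd\mu\Big|\lesssim_s \|\varphi\|_\cA^s.
\]
Taking the product of such inequalities over the blocks $J\in\Qc$ (using $m_i=n_j$ for $j\in I\cap J$) immediately yields the desired bound $\lesssim_t\|\varphi\|_\cA^{|I|}$.

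To prove this one-block estimate, I would apply the controlled tail property of Definition \ref{ctp} to $\varphi$ with a \emph{fixed} choice of $M$, say $M=1$. This gives a decomposition $\varphi=\varphi_1^+-\varphi_1^-+\varphi_2$ with $\varphi_1^\pm\in\cB$ satisfying $\|\varphi_1^\pm\|_\cB\le c\|\varphi\|_\cA$, and with $\|\varphi_2\|_{L^q(\mu)}\le c_q e^{-\alpha_q}\|\varphi\|_\cA$ for every $q\ge1$. Property (B1) then gives $\|\varphi_1^\pm\|_\infty\le c\|\varphi\|_\cA$.

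Next, I would expand the product
\[
\prod_{i=1}^s \varphi\circ f^{m_i}=\prod_{i=1}^s \big(\varphi_1^+-\varphi_1^-+\varphi_2\big)\circ f^{m_i}
\]
into $3^s$ terms and estimate the integral of each one. For a term in which every factor is of type $\varphi_1^+\circ f^{m_i}$ or $\varphi_1^-\circ f^{m_i}$, I would use the pointwise bound $\|\varphi_1^\pm\|_\infty\le c\|\varphi\|_\cA$ together with $\mu$-invariance to get a contribution bounded by $(c\|\varphi\|_\cA)^s$. For any term in which at least one factor is $\varphi_2\circ f^{m_i}$, I would apply Hölder's inequality with all exponents equal to $s$, using $\mu$-invariance to get $\|\psi\circ f^{m_i}\|_{L^s(\mu)}=\|\psi\|_{L^s(\mu)}$: the $\varphi_2$ factor contributes at most $c_se^{-\alpha_s}\|\varphi\|_\cA$ and each remaining factor at most $c\|\varphi\|_\cA$, for a total at most $c_s e^{-\alpha_s}(c\|\varphi\|_\cA)^{s-1}\|\varphi\|_\cA$. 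Summing the $3^s$ contributions gives the desired bound $\lesssim_s\|\varphi\|_\cA^s$.

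There is essentially no serious obstacle: the argument is a direct application of the controlled tail property followed by Hölder's inequality, and the key trick is that $M$ can be chosen as an absolute constant (not optimised in terms of any dynamical quantity), so the resulting implicit constant depends only on $s$ (hence on $t$) through the fixed constants $c$, $c_s$ and $\alpha_s$. The only bookkeeping to watch is that the number of blocks and the cardinalities $|I\cap J|$ are all at most $t$, so the final multiplicative constant depends only on $t$.
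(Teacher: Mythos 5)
Your proposal is correct and follows essentially the same route as the paper: both reduce to a per-block estimate and control each block via H\"older's inequality together with the $L^q(\mu)$ bound $\|\varphi\|_{L^q(\mu)}\lesssim_q\|\varphi\|_\cA$ supplied by the controlled tail property. The only cosmetic difference is that the paper takes $M=0$ in Definition \ref{ctp}, which makes the decomposition trivial and yields this $L^q$ bound for $\varphi$ directly, so the $3^s$-term expansion in your argument is not needed.
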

\begin{proof}
    We have
    \begin{equation} \label{simple_ineq}
    \bigg|\prod_{J\in\Qc}\Big(\int \prod_{j\in I\cap J}\varphi\circ f^{n_j}\diff\mu\Big)\bigg| \le  \prod_{J\in\Qc}\bigg(\int \Big|\prod_{j\in I\cap J}\varphi\circ f^{n_j}\Big|\diff\mu\bigg).
    \end{equation}
    The controlled tail property with $M=0$ gives
    $$\|\varphi\|_{L^q(\mu)}\le c_q \|\varphi\|_\cA\quad\text{for every}\quad\varphi\in\cA, q\ge1,$$
    where $c_q>0$   depends only on $q$. Using H\"older's inequality and the invariance of $\mu$, we obtain that for every $J\in\Qc$ we have
    \begin{equation} \label{randomestimate}
        \int \Big|\prod_{j\in I\cap J}\varphi\circ f^{n_j}\Big|\diff\mu\lesssim_t \|\varphi\|_\cA^{|I\cap J|}.
    \end{equation}
    Inserting \eqref{randomestimate} in \eqref{simple_ineq} gives the statement.
\end{proof}

Finally, the following is our version of \cite[Lemma 9.1]{bjo-gor-clt}. As mentioned above, once this lemma is proved, the proof of Theorem \ref{general-clt} can be obtained by the same argument as in \cite{bjo-gor-clt}. Recall that $C_f=\max\{1,\|f^*\|_\cA\}$.

\begin{lemma} \label{alternate-lemma}
Fix $0\le a<b$, and let $t\ge 2$ be an integer.
     Assume $(X,\mu,f,\cA,\cB)$ satisfies the controlled tail property as in Definition \ref{ctp}.  Then, for every partition $\Qc$ of $[t]$, every $n_1,\dots,n_t\in\Z$ satisfying $\max\Qc\le a$ and $\min\Qc>b$, every $I\subseteq [t]$, and every $\varphi\in\cA$, we have
    \begin{equation} \label{mixedmix}
    \bigg|\int\Big(\prod_{j\in I}\varphi\circ f^{n_j}\Big)\diff\mu-\prod_{J\in\Qc}\Big(\int\Big(\prod_{j\in I\cap J}\varphi\circ f^{n_j}\Big)\diff\mu\Big)\bigg|\le C_t \eta_t^b \, C_f^{ta}\|\varphi\|_\cA^{|I|},
    \end{equation}
    where the constants $C_t>0$ and $0<\eta_t<1$  are both independent of $\Qc,a,b,\varphi$, and $I$.
\end{lemma}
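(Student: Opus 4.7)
The plan is to reduce the estimate to a statement about bounded observables in $\cB$, and then combine the mixing (B2) with the product-splitting property (B3).

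For each $j \in [t]$, let $J(j) \in \Qc$ denote the block containing $j$, set $n_J^* := \min_{j \in J} n_j$ for $J \in \Qc$, and $\tilde n_j := n_j - n_{J(j)}^* \in [0,a]$. Fix a parameter $M > 0$, to be chosen at the end. Since $f^*$ is bounded on $\cA$, we have $\|\varphi \circ f^{\tilde n_j}\|_\cA \le C_f^a \|\varphi\|_\cA$. Applying the controlled tail property (Definition \ref{ctp}) to each $\varphi \circ f^{\tilde n_j}$, I would write
$$\varphi \circ f^{\tilde n_j} = \psi_j^+ - \psi_j^- + \rho_j,$$
with $\psi_j^\pm \in \cB$, $\|\psi_j^\pm\|_\cB \le cMC_f^a\|\varphi\|_\cA$, and $\|\rho_j\|_{L^q(\mu)} \le c_q e^{-\alpha_q M} C_f^a \|\varphi\|_\cA$ for all $q \ge 1$. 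Set $\psi_j := \psi_j^+ - \psi_j^-$, so that $\varphi \circ f^{n_j} = (\psi_j + \rho_j) \circ f^{n_{J(j)}^*}$.

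Substituting this identity on both sides of \eqref{mixedmix} and multiplying out, one obtains a sum indexed by tuples $(S_J)_{J \in \Qc}$ with $S_J \subseteq I \cap J$, that separates the ``pure-$\psi$'' tuple (where every $S_J = I \cap J$) from all other tuples, each of which contains at least one $\rho$-factor. For the latter, letting $T\subseteq I$ be the set of $\rho$-indices (so $|T|\ge 1$), H\"older's inequality with exponent $|T|$ on the $\rho$-factors, combined with the sup-norm bound $\|\psi_j\|_\infty \le 2cMC_f^a\|\varphi\|_\cA$ (via (B1)) and the $L^{|T|}$-tail bound $\|\rho_j\|_{L^{|T|}(\mu)} \le c_{|T|} e^{-\alpha_{|T|} M} C_f^a \|\varphi\|_\cA$, will bound each such summand (on either side of \eqref{mixedmix}, using the $f$-invariance of $\mu$) by $\lesssim_t M^tC_f^{ta} e^{-\alpha_t M}\|\varphi\|_\cA^{|I|}$. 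Summing the finitely many tuples gives the same order for the total $\rho$-contribution.

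It remains to estimate the pure-$\psi$ contribution
$$\int \prod_{J \in \Qc} \Psi_J \circ f^{n_J^*}\dd\mu \;-\; \prod_{J \in \Qc} \int \Psi_J \circ f^{n_J^*}\dd\mu,\qquad \Psi_J := \prod_{j \in I \cap J}\psi_j.$$
Expanding further $\psi_j = \psi_j^+ - \psi_j^-$, this splits into a finite signed sum indexed by $\sigma : I \to \{+,-\}$, of analogous expressions in which each $\psi_j$ is replaced by $\psi_j^{\sigma(j)} \in \cB$. For each $\sigma$ and each $J$, condition (B3) applied to $(\psi_j^{\sigma(j)})_{j \in I \cap J} \subset \cB$ decomposes the pointwise product $\prod_{j \in I \cap J}\psi_j^{\sigma(j)}$ as a finite sum $\sum_l \Psi_{J,\sigma,l}$ with $\pm\Psi_{J,\sigma,l} \in \cB$ and $\|\Psi_{J,\sigma,l}\|_\cB \lesssim_t (MC_f^a\|\varphi\|_\cA)^{|I \cap J|}$. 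Reordering $\Qc = \{J_1,\dots,J_k\}$ so that $n_{J_1}^* \le \cdots \le n_{J_k}^*$, the hypothesis $\min \Qc > b$ forces $n_{J_{i+1}}^* - n_{J_i}^* > b$. Invoking mixing (B2) on the $k \le t$ super-observables $\Psi_{J_i,\sigma,l_i}$ placed at times $n_{J_i}^*$, and summing over the $l_i$'s and the sign patterns $\sigma$, bounds the pure-$\psi$ contribution by $\lesssim_t \theta_t^b (MC_f^a)^{|I|}\|\varphi\|_\cA^{|I|}$ for some $\theta_t \in (0,1)$.

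Combining the two bounds, the left-hand side of \eqref{mixedmix} is $\lesssim_t M^t C_f^{ta}\bigl(e^{-\alpha_t M} + \theta_t^b\bigr)\|\varphi\|_\cA^{|I|}$. Choosing $M := \beta b$ with $\beta > 0$ small enough (depending only on $t$, $\alpha_t$, and $\theta_t$) so that the polynomial factor $M^t$ is absorbed into $\eta_t^b$ for some $\eta_t \in (0,1)$ will yield the desired bound. The main technical point is the interplay between the tail decomposition and (B3): the former splits the (possibly unbounded, $\cA$-regular) observable $\varphi \circ f^{\tilde n_j}$ into a bounded $\cB$-part plus a small-tail part, while the latter packages the within-block pointwise products as finite sums of genuine $\cB$-observables, bypassing the absence of multiplicative structure on $\cA$.
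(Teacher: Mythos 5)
Your proposal is correct and follows essentially the same strategy as the paper's proof: shift within each block by the block minimum so the controlled tail property applies with $\cA$-norm inflated by at most $C_f^a$, split each factor into two $\cB$-parts plus an exponentially small $L^q$-tail, bound the tail-containing terms by H\"older, handle the pure-$\cB$ terms via (B3) and the exponential mixing (B2) across blocks, and finally choose $M$ proportional to $b$ to balance the two error terms. The only cosmetic difference is the exact choice of $M$ (the paper takes $M=-b\alpha_t^{-1}\log\theta_t$ rather than a generic small multiple of $b$), which does not affect the argument.
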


    Observe that the condition $\max\Qc\le a<b<\min\Qc$ implies that the sets in $\Qc$ are ``monotone'', i.e., given any two of them, all the elements of one are greater than all the elements of the other.

\begin{proof}[Proof of Lemma \ref{alternate-lemma}]
Fix $I$ as in the statement. For every $J\in\Qc$ such that $I\cap J=\emptyset$, we put $\prod_{j\in I\cap J}\varphi\circ f^{n_j}:=1$. So we can assume, without loss of generality, that $I\cap J\not=\emptyset$ for every $J\in\Qc$.

Since $\mu$ is $f$-invariant,  we can rewrite the quantity inside the modulus of the left hand side of \eqref{mixedmix} as
    \begin{align}
        D:&=\int\Big(\prod_{j\in I}\varphi\circ f^{n_j-n_I}\Big)\diff\mu-\prod_{J\in\Qc}\Big(\int\Big(\prod_{j\in I\cap J}\varphi\circ f^{n_j-n_{I\cap J}}\Big)\diff\mu\Big) \nonumber \\
        &=\int\Big(\prod_{J\in\Qc}\Big(\prod_{j\in I\cap J}\varphi\circ f^{n_j-n_{I\cap J}}\Big)\circ f^{n_{I\cap J}-n_I}\Big)\diff\mu  
        -\prod_{J\in\Qc}\Big(\int\Big(\prod_{j\in I\cap J}\varphi\circ f^{n_j-n_{I\cap J}}\Big)\diff\mu\Big), \label{mixed-mixing}
    \end{align}
where for $L\subseteq [t]$, we set $n_L:=\min\{n_j\mid j\in L\}$. Note that all the iteration powers of $f$ appearing in \eqref{mixed-mixing} are non-negative.

Observe that the inequality \eqref{mixedmix} is homogeneous in $\varphi$. Hence, up to rescaling, we may assume $\|\varphi\|_\cA=1$. Now, for every $n\ge1$ we use the controlled tail property to write $\varphi\circ f^n=\varphi_{n,1}^+-\varphi_{n,1}^-+\varphi_{n,2}=:\varphi_{n,1}-\varphi_{n,-1}+\varphi_{n,2}$ with:
\begin{enumerate}
    \item $\|\varphi_{n,\pm1}\|_\cB \le cM\|\varphi\circ f^n\|_\cA\le cMC_f^n$;
    \item $\|\varphi_{n,2}\|_{L^q(\mu)}\le c_qe^{-\alpha_qM}\|\varphi\circ f^n\|_\cA\le c_qe^{-\alpha_qM}C_f^n$ for every $q\in\mathbb N^*$,
\end{enumerate}
where $c$ and $c_q$ are as in Definition \ref{ctp} and the constant $M$ is independent of $n$ and will be chosen later. Indexing all the possible combinations of the $v_j$'s in the $\varphi_{j,v_j}$'s with the multi-index $\bv:=(v_j : j\in I)\in\{1,-1,2\}^I$, the last term in \eqref{mixed-mixing} can be expanded as
\begin{align*}
    \sum_{\mathbf{v}\in\{1,-1,2\}^I}\bigg(\int\Big(\prod_{J\in\Qc}\Big(\prod_{j\in I\cap J}&\sign(v_j)\cdot\varphi_{n_j-n_{I\cap J},v_j}\Big)\circ f^{n_{I\cap J}-n_I}\Big)\diff\mu\\
    &-\prod_{J\in\Qc}\Big(\int\Big(\prod_{j\in I\cap J}\sign(v_j)\cdot\varphi_{n_j-n_{I\cap J},v_j}\Big)\diff\mu\Big)\bigg).
\end{align*}
It follows that $|D|$ is bounded by the sum of 
       \begin{align*}
        D_1=\sum_{\mathbf{v}\in\{1,-1,2\}^I\setminus \{1,-1\}^I}  \bigg|\int\Big(\prod_{J\in\Qc}\Big(&\prod_{j\in I\cap J}\varphi_{n_j-n_{I\cap J},v_j}\Big)\circ  f^{n_{I\cap J}-n_I}\Big)\diff\mu\bigg|  \\
        &+\sum_{\mathbf{v}\in\{1,-1,2\}^I\setminus \{1,-1\}^I}\bigg|\prod_{J\in\Qc}\Big(\int\Big(\prod_{j\in I\cap J}\varphi_{n_j-n_{I\cap J},v_j}\Big)\diff\mu\Big)\bigg|
        \end{align*}
        and 
 $$D_2=\sum_{\mathbf{v}\in\{1,-1\}^I}\bigg| \int\Big(\prod_{J\in\Qc}\Big(\prod_{j\in I\cap J}\varphi_{n_j-n_{I\cap J},v_j}\Big)\circ f^{n_{I\cap J}-n_I}\Big)\diff\mu
        -\prod_{J\in\Qc}\Big(\int\Big(\prod_{j\in I\cap J}\varphi_{n_j-n_{I\cap J},v_j}\Big)\diff\mu\Big)\bigg|. $$
      
We now bound $D_1$ and $D_2$. Observe that the case of $D_1$ is new with respect to \cite{bjo-gor-clt}, as in that work there are no tail terms $\varphi_{n,2}$ which are small in the $L^q(\mu)$-norms.

\smallskip

\textbf{Bound for} $D_1$. Each term in both sums can be estimated in modulus using Hölder's inequality and the fact that, since $\mu$ is $f$-invariant, the $L^q(\mu)$-norms are $f^*$-invariant. We have less than $2\cdot 3^t$ terms. Setting ${q_\mathbf{v}}=\big|\{j\in I:\, v_j=2\}\big|$, the term corresponding to a given $\mathbf{v}$ contributes at most
    \begin{align*}
        \Big(\prod_{J\in\Qc}\prod_{j\in I\cap J}C_f^{n_j-n_{I\cap J}}\Big)&\Big(\prod_{J\in\Qc}\prod_{j\in I\cap J,v_j\not=2}c M\Big)c_{q_\mathbf{v}}^{q_{\mathbf{v}}}\exp\Big(-\sum_{J\in\Qc}\sum_{j\in I\cap J,v_j=2}\alpha_{q_\mathbf{v}}M\Big)  \\
        &\lesssim_t C_f^{|I|a}c^{|I|}M^{|I\cap\{j:\, v_j\not=2\}|}\exp\Big(-\sum_{J\in\Qc}\sum_{j\in I\cap J,v_j=2}\alpha_{q_\mathbf{v}}M\Big) \\
        &\lesssim_tC_f^{ta}M^{|I\cap\{j:\, v_j\not=2\}|}\exp\Big(-\sum_{J\in\Qc}\sum_{j\in I\cap J,v_j=2}\alpha_{q_\mathbf{v}}M\Big).
    \end{align*}
The first inequality is due to the assumption $\max\Qc\le a$. Therefore, we have
\begin{equation} \label{dc1bound}
D_1 \lesssim_t \sum_{\mathbf{v}\in\{1,-1,2\}^I\setminus \{1,-1\}^I} C_f^{ta}M^{|I\cap\{j:\, v_j\not=2\}|}\exp\Big(-\sum_{J\in\Qc}\sum_{j\in I\cap J,v_j=2}\alpha_{q_\mathbf{v}}M\Big).
\end{equation}

\smallskip

\textbf{Bound for} $D_2$. We will use here the conditions (B2) and (B3). For all $n\in\mathbb{N}$, define $g_{n,\pm1}:=\varphi_{n,\pm1}/(cMC_f^n)$. Then $\|g_{n,\pm1}\|_\oB\le 1$, and so, up to the multiplicative constant
$$\prod_{J\in\Qc}\prod_{j\in I\cap J}c MC_f^{n_j-n_{I\cap J}}\lesssim_t c^{|I|}M^{|I|}C_f^{|I|a}\lesssim_t M^{|I|}C_f^{ta}$$
(the first inequality is due to the assumption $\max\Qc\le a$), we are left with the quantity
    \begin{equation}
        \sum_{\mathbf{v}\in\{1,-1\}^I}\bigg|\int\Big(\prod_{J\in\Qc}\Big(\prod_{j\in I\cap J}g_{n_j-n_{I\cap J},v_j}\Big)\circ f^{n_{I\cap J}-n_I}\Big)\diff\mu-\prod_{J\in\Qc}\Big(\int\Big(\prod_{j\in I\cap J}g_{n_j-n_{I\cap J},v_j}\Big)\diff\mu\Big)\bigg| \label{mixed-mixing-B}
    \end{equation}
to be estimated. Define the set of multi-indexes
$$W:=\big\{\mathbf{w}=(w_J: \, J\in\Qc)\in\N^{\Qc}\mid \, 1\le w_J\le m_{|I\cap J|}\,\text{ for every }\,J\in\Qc \big\}.$$
With the help of the functions $P_{t,l}$'s and with $m_t$ as in conditions (B3), we can rewrite 
$$ \prod_{j\in I\cap J}g_{n_j-n_{I\cap J},v_j}=\sum_{l=1}^{m_{|I\cap J|}} P_{|I\cap J|,l}(g_{n_j-n_{I\cap J},v_j}:\,j \in I\cap J). $$
Therefore, the expression in \eqref{mixed-mixing-B} is equal to
    \begin{align*}
        \sum_{\mathbf{v}\in\{1,-1\}^I} \bigg|\int\Big(\prod_{J\in\Qc}\sum_{l=1}^{m_{|I\cap J|}} P_{|I\cap J|,l}&(g_{n_j-n_{I\cap J},v_j}:\,j\in I\cap J)\circ f^{n_{I\cap J}-n_I}\Big)\diff\mu\\
        &-\prod_{J\in\Qc}\Big(\int\Big(\sum_{l=1}^{m_{|I\cap J|}} P_{|I\cap J|,l}(g_{n_j-n_{I\cap J},v_j}:\,j\in I\cap J)\Big)\diff\mu\Big)\bigg|\\
        \le \sum_{\mathbf{v}\in\{1,-1\}^I} \sum_{\mathbf{w}\in W}\bigg|\int\Big(\prod_{J\in\Qc}&P_{|I\cap J|,w_J}(g_{n_j-n_{I\cap J},v_j}:\,j\in I\cap J)\circ f^{n_{I\cap J}-n_I}\Big)\diff\mu\\
        &-\prod_{J\in\Qc}\Big(\int\Big(P_{|I\cap J|,w_J}(g_{n_j-n_{I\cap J},v_j}:\,j\in I\cap J)\Big)\diff\mu\Big)\bigg|.
    \end{align*}
Observe that there are at most $2^t\cdot m_t^t$ terms in the last sum, and for each of them we have
$$\big\|\pm P_{|I\cap J|,w_J}(g_{n_j-n_{I\cap J},v_j}:\,j\in I\cap J)\big\|_\oB\le\tilde{c}_{|I\cap J|}\le \tilde{c}_t\text{ for every }J\in\Qc.$$
Hence, we can apply the exponential mixing of order $|\Qc|$ for observables in $\cB$ to the functions $P_{|I\cap J|,w_J}(g_{n_j-n_{I\cap J},v_j}:\,j\in I\cap J)$'s (up to signs, see (B3)) for every $\mathbf{v}\in\{1,-1\}^I$ and $\mathbf{w}\in W$ and deduce, thanks to the assumption $\min\Qc>b$, that \eqref{mixed-mixing-B} is bounded by  a multiplicative constant (depending only on $t$) times 
$$\theta_t^{\min_{J_1,J_2\in\Qc}\{|n_{I\cap J_1}-n_{I\cap J_2}|\}}<\theta_t^b.$$
It follows that
\begin{equation} \label{dc2bound}
    D_2 \lesssim_t M^{|I|}C_f^{ta}\theta_t^b.
\end{equation}

\smallskip
    
Combining \eqref{dc1bound} and \eqref{dc2bound}, we have the following estimate for the modulus of $D$:
    \begin{equation}
        |D| \lesssim M^{|I|}C_f^{ta}\theta_t^b+\sum_{\mathbf{v}\in\{1,-1,2\}^I\setminus\{1,-1\}^I}\bigg(M^{|I\cap\{j:\, v_j\not=2\}|}C_f^{ta}\exp\Big(-\sum_{J\in\Qc}\sum_{j\in I\cap J,\\v_j=2}\alpha_{q_\mathbf{v}}M\Big)\bigg), \label{almost-there}
    \end{equation}
where the implicit constant depends only on $t$.

\medskip

Now we choose $$M:=-b\alpha_t^{-1}\log{\theta_t},$$
and set $C:=\max\{1,-\alpha_t^{-1}\log{\theta_t}\}$. Recall that the $\alpha_t$'s are non-increasing. Then, \eqref{almost-there} gives
 \begin{align*}
     |D| &\lesssim C^{|I|}C_f^{ta}b^{|I|}\theta_t^b+\sum_{\mathbf{v}\in\{1,-1,2\}^I\setminus\{1,-1\}^I}C^{|I\cap\{j:\, v_j\not=2\}|}C_f^{ta}b^{|I\cap\{j:\, v_j\not=2\}|}\theta_t^{b|\{j:\, v_j=2\}|}\\
     &\lesssim_t C_f^{ta}\max\{b^t,1\}\theta_t^b.
     \end{align*}
If $b\le1$, we take $\eta_t:=\theta_t$. If $b>1$, there exists $\theta_t<\eta_t<1$ such that $b^t\theta_t^b\lesssim\eta_t^b$, where the implicit constant depends only on $t$. In both cases, the proof is complete.
\end{proof}

\begin{remark}\rm
As in \cite{bian-dinh-sigma}, our proofs of Theorems \ref{thm-mixing} and \ref{thm-clt} can be easily adapted in the more general setting of invertible horizontal-like maps in any dimension (see \cite{dns-horizontal,ds-geometry}, and \cite{duj} for the case of dimension 2), under the natural assumption that the main dynamical degree dominates the other degrees \cite{bdr}. In this setting, the unique measure of maximal entropy is moderate by \cite{bdr-h-l}. On the other hand, our proofs cannot be directly adapted in the setting of automorphisms of compact Kahler manifolds, as non-trivial psh functions do not exist in this setting. As the theory of super-potentials is needed \cite{bd-kahler,ds-superpot-kahler,wu-kahler}, we postpone this study to a later work.
\end{remark}

\bigskip

\noindent\textbf{Acknowledgements.}
The first author is part of the PHC Galileo project G24-123. He would also like to thank his advisor Fabrizio Bianchi for useful discussions. This paper was partially written during the visit of the second author to Chinese Academy of Sciences. He would like to thank Song-Yan Xie and Zhangchi Chen for their warm hospitality.

\bigskip

\noindent\textbf{Funding.}
Hao Wu is supported by the Singapore MOE grant  MOE-T2EP20121-0013.


\medskip

\end{document}